\newtheorem{theorem}{Theorem}
\newtheorem{lemma}[theorem]{Lemma}
\newtheorem{claim}[theorem]{Claim}
\newtheorem{prop}[theorem]{Proposition}
\newtheorem{remark}[theorem]{Remark}
\newtheorem{conjecture}[theorem]{Conjecture}
\newtheorem*{rep@theorem}{\rep@title}
\newcommand{\newreptheorem}[2]{%
\newenvironment{rep#1}[1]{%
 \def\rep@title{#2 \ref{##1} (restatement)}%
 \begin{rep@theorem}}%
 {\end{rep@theorem}}}
\newcommand{\suppress}[1]{}
\DeclareMathOperator{\Sen}{Sen}
\newcommand{\cA}{{\cal A}}
\newcommand{\cB}{{\cal B}}
\newcommand{\cN}{{\cal N}}
\newcommand{\cS}{{\cal S}}
\newcommand{\cT}{{\cal T}}
\newcommand{\SB}{{\overline{\cS}}}
\newcommand{\rmx}{{r_{\max}}}
\newcommand{\ra}{\rightarrow}
\newcommand{\ep}{\varepsilon}
\newcommand{\II}{{\mathbb{I}}}
\newcommand{\RR}{{\mathbb{R}}}
\newcommand{\ZZ}{{\mathbb{Z}}}
\newcommand{\be}{\begin{equation}}
\newcommand{\ee}{\end{equation}}
\newcommand{\bea}{\begin{eqnarray}}
\newcommand{\eea}{\end{eqnarray}}
\newcommand{\bean}{\begin{eqnarray*}}
\newcommand{\eean}{\end{eqnarray*}}
\def\ba#1\ea{\begin{align}#1\end{align}}
\def\ban#1\ean{\begin{align*}#1\end{align*}}
\newcommand{\eq}[1]{(\ref{eq:#1})}
\newcommand{\sect}[1]{Sec.~\ref{sec:#1}}
\newcommand{\thmref}[1]{Theorem~\ref{thm:#1}}
\newcommand{\lemref}[1]{Lemma~\ref{lem:#1}}
\newcommand{\propref}[1]{Proposition~\ref{prop:#1}}
\newcommand{\MC}{{\sc Maximum Cut}}
\newcommand{\SC}{{\sc Sparsest Cut}}
\begin{document}
\clubpenalty=10000
\widowpenalty = 10000

\title{
Dimension-free {\huge L$_2$} Maximal Inequality for Spherical Means in the Hypercube}

\author{Aram W. Harrow\footnote{Massachusetts Institute of Technology} \and Alexandra
  Kolla\footnote{University of lllinois, Urbana-Champaign}
  \and Leonard J. Schulman\footnote{California Institute of Technology}}
\maketitle

\begin{abstract}
We establish the maximal inequality claimed in the title.
 In combinatorial terms this has the implication that for sufficiently small $\ep>0$, for all $n$, any marking of an $\ep$ fraction of the vertices of the $n$-dimensional hypercube necessarily leaves a vertex $x$ such that marked vertices are a minority of every sphere centered at $x$.
\end{abstract}

\section{Introduction}\label{sec:intro}
Let $\II^n$ be the $n$-dimensional hypercube:
the set $\{0,1\}^n$ equipped with Hamming metric
$d(x,y)=|\{i:x_i\neq y_i\}|$.
Let $V=\RR^{\II^n}$ be the vector space of real-valued
functions on the hypercube.   For $x \in \II^n$, let $\pi_x$ denote
the {\em evaluation map}\footnote{This notation choice is because our
  paper is replete with operators acting on functions, and the associative composition $\pi_x A f$ is preferable to the cumbersome $(Af)(x)$.}
 from $V\ra \RR$ defined by $\pi_x f = f(x)$,
for $f\in V$.
If $\cA \subseteq
\textrm{Hom}(V,V)$, (the vector space consisting of all linear mappings from V into itself)  the \textit{maximal operator} $M_\cA:V
\to V$ is the sublinear operator in which $M_\cA f$ is defined by
\be \pi_x M_\cA f  = \sup_{A \in \cA} \pi_x Af \ee

Of interest is the family $\cS=\{S_k\}_{k=0}^n$ of
\textit{spherical means}, the stochastic linear operators $
S_k:V  \to  V $ given by
\[ \pi_x S_k f = \sum_{\{y: d(x,y)=k\}} \pi_y f / \binom{n}{k}
\]
Applying $M$, we have the \textit{spherical maximal operator}
$M_{\cS}:V  \to V$ defined by
\be \pi_x
M_{\cS}f  = \max_{0 \leq k \leq n} \pi_x S_k f \ee
The result of this paper is the following dimension-free bound.
\begin{theorem} \label{thm:main} There is a constant $A_\II$
  such that for all $n$, $\|M_{\cS}\|_{2 \to 2}<A_\II$.
 \end{theorem}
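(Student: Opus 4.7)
The plan is to obtain the dimension-free bound by spectral (Walsh) analysis, comparing each spherical mean $S_k$ to a Bonami--Beckner noise operator $T_{\rho_k}$, for which a dimension-free $L^2$ maximal inequality follows from Stein's theorem on symmetric Markov semigroups.

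\textbf{Step 1 (Spectral setup).} First I would decompose $V=\bigoplus_{j=0}^n V_j$ into the Laplacian eigenspaces (the weight-$j$ Walsh functions). Each hyperoctahedrally-invariant operator, and in particular every $S_k$ and every $T_\rho$, acts on $V_j$ by a scalar: $S_k$ by the normalized Krawtchouk eigenvalue $\lambda_{k,j}=K_k(j)/\binom{n}{k}$, and $T_\rho$ by $\rho^j$.

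\textbf{Step 2 (Noise maximal).} Next I would apply Stein's dimension-free maximal inequality to the symmetric Markov semigroup $T_\rho=e^{-tL}$ (with $\rho=e^{-t}$), giving
\[
\Big\|\sup_{\rho\in[0,1]}|T_\rho f|\Big\|_2 \;\le\; C\,\|f\|_2,
\]
with $C$ absolute. A direct $L^2$ proof is also available via a Littlewood--Paley square function, exploiting that the semigroup is simultaneously diagonalized on the $V_j$.

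\textbf{Step 3 (Comparison and error control).} For $k\le n/2$ set $\rho_k:=1-2k/n$, so that $\lambda_{k,j}$ and $\rho_k^j$ agree to leading order (a saddle-point computation on the Krawtchouk generating function $\sum_k K_k(j)z^k=(1-z)^j(1+z)^{n-j}$). Define $D_k:=S_k-T_{\rho_k}$, acting on $V_j$ as the scalar $d_{k,j}:=\lambda_{k,j}-\rho_k^j$. The key estimate to prove is
\[
\sup_{j,n}\;\sum_{k=0}^{\lfloor n/2\rfloor} |d_{k,j}|^2 \;=\; O(1).
\]
Granting this, $(\sup_k|a_k|)^2\le\sum_k|a_k|^2$ combined with Parseval gives
\[
\Big\|\sup_k|D_k f|\Big\|_2^2 \;\le\; \sum_k\|D_k f\|_2^2 \;=\; \sum_j\Big(\sum_k |d_{k,j}|^2\Big)\|f_j\|_2^2 \;=\; O(\|f\|_2^2).
\]
The range $k>n/2$ reduces to the previous one via the identity $S_k=S_{n-k}\circ R$, where $R$ is the bit-complement operator; $R$ acts by $\pm 1$ on each $V_j$ and hence leaves the relevant maximal norms invariant (the noise-comparison operator for $k>n/2$ is simply $T_{\rho_{n-k}}\circ R$).

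\textbf{Step 4 (Combine).} Pointwise $|S_k f|\le|T_{\rho_k}f|+|D_k f|$, so $M_{\cS}f\le\sup_k|T_{\rho_k}f|+\sup_k|D_k f|$; taking $L^2$ norms and applying Steps 2--3 yields $\|M_{\cS}f\|_2\le A_\II\|f\|_2$ with $A_\II$ absolute.

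\textbf{Main obstacle.} The crux is the Krawtchouk estimate in Step 3. The approximation $\lambda_{k,j}\approx\rho_k^j$ is comfortable when both $k/n$ and $j/n$ are small, but one must control the error uniformly, including the awkward intermediate regime $k,j\sim n/2$ where neither term is itself small. A refined saddle-point analysis of the Krawtchouk generating function (or equivalently careful use of the Hermite-polynomial limit of the Krawtchouk family) will be needed to make the $\ell^2$ bound on $d_{k,j}$ go through uniformly in $n$.
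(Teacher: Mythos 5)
Your Step 3 contains a false claim, and it is not in the regime you flag as the obstacle. The estimate $\sup_{j,n}\sum_{k\le n/2}|d_{k,j}|^2=O(1)$ fails badly for Walsh levels $j$ close to $n$. Concretely, take $j=n$ (the parity character): the eigenvalue of $S_k$ there is $\kappa_k^{(n)}(n)=(-1)^k$, which does not decay in $k$ at all, while $\rho_k^n=(1-2k/n)^n\le e^{-2k}$. Hence $|d_{k,n}|\ge 1-e^{-2}$ for every $k\ge 1$ and $\sum_{k\le n/2}|d_{k,n}|^2=\Theta(n)$. The same happens for any $j>n/2$, since by reflection symmetry $\kappa_k^{(n)}(j)=(-1)^k\kappa_k^{(n)}(n-j)$ oscillates in sign with $k$ and its magnitude is governed by the complementary weight $n-j$, whereas $\rho_k^j$ is monotone in $k$. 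Your reduction $S_k=S_{n-k}\circ R$ removes large \emph{radii} $k$, not large \emph{Walsh weights} $j$, so it does not touch this problem. (Separately, even for small $j$ the claim needs genuine cancellation rather than smallness of each term: for $j=2$ both $\sum_k\kappa_k(2)^2$ and $\sum_k\rho_k^{4}$ are $\Theta(n)$, and only their difference, which one can compute to be $4k(n-k)/n^2(n-1)$ per term, is square-summable. That part is fixable but is where the real Krawtchouk work lives.)

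To repair the argument you would need either an extra orthogonal splitting of $f$ into low and high Walsh-weight parts, comparing $S_k$ on the high part with $(-1)^kT_{\rho_k}$ acting through the weight $n-j$ (the sign is harmless inside a $\sup_k|\cdot|$), or, as the paper does, a square function built only from \emph{same-parity differences} $S_{2k}-S_{2k-2}$ and $S_{2k+1}-S_{2k-1}$. The paper's route is: compare $M_{\cS}$ to the Ces\`aro (``senate'') maximal operator $M_{\Sen(\cS)}$ via Abel summation, bounding the error by the square function above; the eigenvalue differences collapse to $-\tfrac{4x(n-x)}{n(n-1)}\kappa^{(n-2)}_{x-1}(\ell-2)$, and the prefactor $x(n-x)/n^2$ is exactly what kills both the $x\approx 0$ and $x\approx n$ ends that break your version. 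The senate operators are then dominated pointwise by senates of noise operators, which are controlled by the Hopf--Kakutani--Yosida maximal ergodic theorem plus Marcinkiewicz interpolation (playing the role of your Stein semigroup maximal theorem). Your Steps 1, 2 and 4 are sound; the proof stands or falls on producing a correct substitute for the Step 3 estimate.
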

Equivalently, for all $n$ and $f$, $\|M_{\cS}f\|_2 \leq A_\II \|f\|_2$.

Maximal inequalities for various function spaces such as
$L_p(\RR^n)$, and with stochastic linear operators defined
by various distributions such as uniform on spheres (as
above), uniform on balls, or according to the distribution
of a random walk of specified length (ergodic averaging),
have been extensively studied; see~\cite{NT10} for a good
review.  Most previous work does not explicitly consider
finite or discrete metric spaces; however, see~\cite{NS94} for a maximal inequality on the free group on
finitely many generators.

One may ask whether the hypercube bound should follow from
known results for larger spaces. The hypercube of dimension greater than $1$ does not
embed isometrically in Euclidean space of any
dimension~\cite{Enflo69,LinialM00}, so inequalities for
Euclidean spaces do not seem to be a useful starting
point. The hypercube does embed isometrically in $\RR^n$
with the $L_1$ norm, but there is no maximal inequality for
this metric. To see this, still in the context of discrete
metric spaces, consider the space $\ZZ^2$ with the $L_1$
distance. Fixing any nonnegative integer $N$ let $f$ be the
indicator function of  $\{x:\sum x_i=0,\sum |x_i|\leq
2N\}$. Then $\|f\|_2^2=2N+1$ while $\|M_{\cS}f\|_2^2 \in
\Omega(N^2)$. A similar gap (between $O(N^{n-1})$ and
$\Omega(N^n)$) occurs in any fixed dimension $n$, because
there exists a set of size $O(N^{n-1})$ constituting a
positive fraction of $\Omega(N^n)$ $L_1$-spheres,
necessarily of many radii. It is therefore not the $L_1$
metric structure of the hypercube which makes a maximal
inequality possible, but, essentially, its bounded side-length.

\subsection{Combinatorial interpretation}
In the special case that $f$ is the indicator function of a
set of vertices $F$ in $\II^n$, \thmref{main} has the
following consequence: For nonnegative $\ep$ less than some
$\ep_0$ and for all $n$, if $|F|<\ep 2^n$ then there exists
$x \in \II^n$ such that in every sphere about $x$, the
fraction of points which lie in $F$ is $O(\sqrt\ep)$.

The aspect of interest is that this holds for every sphere
about $x$. The analogous claim for a fixed radius is a
trivial application of the Markov inequality;
by a union bound the same holds for any constant number of
radii. Avoiding the union bound over all $n+1$ radii is the essence of the
maximal inequality.

The combinatorial interpretation also has an edge
version. Let $F'$ be a set of edges in $\II^n$. Let the
distance from a point to an edge be the distance to the
closer point on that edge. \thmref{main} has
the following consequence: For nonnegative $\ep$ less than
some $\ep_1$ and for all $n$, if $|F'|<\ep n 2^n$ then there
exists $x \in \II^n$ such that in every sphere about $x$,
the fraction of edges which lie in $F'$ is $O(\sqrt\ep)$.
(Define a function $f$ on vertices by $f(y)=$ the fraction
of edges adjacent to $y$ that lie in $F'$. Note that
$\|f\|_2 \in O(\sqrt{\ep})$. Apply \thmref{main} to $f$. For
the desired conclusion observe that in the sphere of edges
of distance $k$ from $x$, the fraction of edges lying in
$F'$ is bounded for $k \leq n/2$ by $2 \pi_xS_kf$, and for $k
> n/2$ by $2\pi_xS_{k+1}f$.)

\subsection{Maximal inequalities and Unique Games on the Hypercube}

In this section we discuss one of the motivations for the present work, although our main result does not directly yield progress on the question.

Khot's \textit{Unique Games Conjecture} (UGC)~\cite{Kho02} has, for a decade, been the focus of great attention.
Either resolution of the conjecture will have implications for the hardness of approximating NP-hard problems. Falsification, in particular, is likely to provide powerful new algorithmic techniques. On the other hand verification of UGC will imply that improving the currently best known approximation algorithms for
many important computational problems such as {\sc Min-2Sat-Deletion}~\cite{Kho02}, {\sc Vertex Cover}~\cite{KhotRegev}, {\MC}~\cite{KKMO} and
non-uniform {\SC}~\cite{CKKRS,KV}, is NP-hard. In addition, in recent years, UGC
has also proved to be intimately connected to the limitations of
semidefinite programming (SDP). Making this connection precise, the authors in~\cite{Aus07} and~\cite{Rag08} show that if UGC is true, then for every constraint satisfaction
problem (CSP) the best approximation ratio is given by a
certain simple SDP.  While the UGC question, in short, is arguably among the most important challenges in computational complexity, the current evidence in either direction is not strong, and there is no consensus regarding the likely answer.

A Unique Game instance is specified by an undirected constraint graph $G = (V,E)$, an integer $k$ which is the alphabet size,
a set of variables $\{x_u\}_{u \in V}$, one for each vertex $u$, and
a set of permutations (constraints) $\pi_{uv}: [k] \rightarrow [k]$,
one for each $(u,v)$ s.t.\ $\{u,v\}\in E$, with $\pi_{uv} = (\pi_{vu})^{-1}$.
An assignment of values in $[k]$ to the variables is said to
satisfy the constraint on the edge $\{u,v\}$ if $\pi_{uv}(x_u) = x_v$.
The optimization problem is to assign a value in $[k]$ to each variable $x_u$ so as to maximize the number of
satisfied constraints.

Khot~\cite{Kho02} conjectured that it is NP-hard
to distinguish between the cases when almost all, or very few, of the constraints of a Unique
Game are satisfiable:

\begin{conjecture}(UGC)
For any constants $\epsilon, \delta > 0$, there is a $k(\epsilon,\delta)$ such that for any $k > k(\epsilon,
\delta)$, it is NP-hard to distinguish between instances of Unique
Games with alphabet size $k$ where at least $1-\epsilon$ fraction of
constraints are satisfiable and those where at most $\delta$
fraction of constraints are satisfiable.
\end{conjecture}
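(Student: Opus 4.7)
The conjecture is stated in this paper as motivation and is not proved there; it remains the central open problem in hardness of approximation, so the proposal below is necessarily aspirational and describes the paradigm that any such proof would have to instantiate, flagging the step at which all current attempts stall. The plan is to produce a polynomial-time reduction from a gap version of \textsc{Label Cover}---whose $(1, \eta)$-gap version is NP-hard via the PCP theorem plus parallel repetition---to a $(1-\ep, \delta)$-gap Unique Games instance. For each \textsc{Label Cover} vertex one introduces a block of $2^R$ Boolean variables interpreted as the long code of a label in $[R]$, and for each edge with projection $\pi$ one imposes a distribution of unique (permutation) constraints between pairs of long codes, chosen so that the satisfying assignment structure of the source instance is reflected in the fraction of unique constraints a Boolean assignment can satisfy.

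The completeness side of such a reduction is mechanical: dictator assignments consistent with a satisfying labeling satisfy all but an $\ep$ fraction of the generated constraints. The soundness side is where the argument must do genuine work. One needs an invariance-style statement: any assignment satisfying more than a $\delta$ fraction of the generated constraints must, on most blocks, correlate with a short list of ``influential'' coordinates, and the induced list-decoding must yield a \textsc{Label Cover} assignment of value exceeding $\eta$. For noise-operator tests on $\II^R$ this role is played by the Majority-Is-Stablest framework, but those tests produce $2$-to-$1$ constraints rather than genuine permutations; the inverse theorems currently available do not suffice in the $1$-to-$1$ regime, and no known small-set expansion statement plugs this gap.

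The main obstacle is therefore the construction of a two-query, permutation-structured dictator test together with a matching soundness theorem. The most promising extant route, which I would pursue, is to build on the proof of the $2$-to-$2$ Games Conjecture by Khot, Minzer, Safra and coauthors: their Grassmann-expansion theorem supplies the required pseudorandomness for the $2$-to-$2$ regime, and a strengthening in which Grassmann is replaced by an algebraic scheme whose associated tests are genuinely bijective would suffice to complete the program. No such scheme is currently known, and I would identify the requisite expansion theorem as the principal open sub-problem. A speculative alternative, specifically motivated by the present paper, is to design a dictator test directly on $\II^n$ so that Theorem~\ref{thm:main} forces cheating strategies to spread mass across many Hamming radii and thereby incur a maximal-function obstruction; whether the $L^2$ maximal bound can be upgraded into a useful list-decoding statement is unclear, and I would treat this as a secondary, exploratory line rather than an executable plan.
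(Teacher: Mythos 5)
You have correctly identified the situation: this statement is Khot's Unique Games Conjecture, reproduced in the paper verbatim from~\cite{Kho02} purely as motivation for the maximal inequality. The paper offers no proof, claims none, and indeed explicitly says the main result ``does not directly yield progress on the question.'' There is therefore nothing in the paper to compare your proposal against, and your decision to treat the statement as an open conjecture rather than manufacture a purported proof is the right one. Your sketch of the standard paradigm --- reduction from gap \textsc{Label Cover} via long codes, a two-query dictator test with permutation constraints, and an invariance/list-decoding soundness argument --- is an accurate description of where the difficulty lies (soundness for genuinely $1$-to-$1$ tests, beyond what the Grassmann-based $2$-to-$2$ theorem delivers), and you correctly flag that step as open rather than claiming to execute it.

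One caution on your secondary, exploratory suggestion: the paper's own framing runs in the opposite direction from what you propose. Theorem~\ref{thm:main} is presented as evidence \emph{against} a natural adversary strategy on hypercube instances (no $\ep$-fraction of marked edges can dominate some sphere around every seed vertex), i.e.\ it is a favorable indication for seed-and-propagate algorithms and hence, if anything, a tool for the program of \emph{refuting} UGC on hypercube-like instances, not for building a hardness reduction. So an attempt to use the $L_2$ maximal inequality as a soundness ingredient in a dictator test would be working against the grain of the result; if you pursue that line, you would need to explain why a bound that limits how much an adversary can concentrate on spheres helps certify hardness rather than tractability.
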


 Numerous works have focused on designing approximation algorithms for Unique Games. While earlier
 papers~\cite{Kho02,Trevisan,GT,CMM1,CMM2} presented efficient algorithms for any instance that might have very poor approximation guarantees when run on the worst case input, recent works have focused on ruling out hardness for large classes of instances. Such instances include expanders~\cite{AKKTSV,MM}, local expanders~\cite{AIMS,RS10}, and more generally, graphs with a few large eigenvalues~\cite{K}. In~\cite{KMM}, hardness of random and semi-random distributions over instances is also ruled out. We note that in~\cite{ABS}, a sub-exponential time algorithm for general instances is given.\\

  This recent line of work can be seen as a strategy to disprove UGC by ruling out hardness of instances largely based on their spectral profile, and more specifically the number of ``large enough" eigenvalues (small set expansion) of the instance's constraint graph. Following this strategy, the question to ask next is what type of graphs are the ones on which all those techniques fail. The hypercube is typical of such graphs, with its spectrum lying in a ``temperate zone" in terms of expansion. This property, along with the high symmetry of the hypercube, makes it a natural next frontier toward disproving UGC.\\

It is typical when dealing with a $1-\epsilon$ satisfiable instance to
think of it as originating from a completely satisfiable instance,
where a malicious adversary picks an $\epsilon$ fraction of edges in the constraint graph and ``spoils'' them, by modifying their corresponding constraints. Some likely algorithms for Unique Games on the hypercube are seed-and-propagate (exhaustively range over assignments for a small subset of nodes and propagate the assignment from those nodes according to the constraints on the edges, using some fixed conflict resolution strategy), or ``local SDP''. In either case the performance of these algorithms depends on whether the adversary can ``rip up'' the graph enough so that the algorithm has difficulty propagating the seeded values, or otherwise combining local solutions into larger regions---even though in a pure expansion sense the graph is not \textit{so} easy to rip up that the algorithm could afford to
work just in local patches of the graph and throw away all the edges between patches. A good strategy to show that the adversary can win, is to show that the adversary can select an $\ep$ fraction of the edges of the graph in such a way
that around every seed vertex $x$ there is a sphere in which a majority of the of the edges have been selected by the adversary.

\thmref{main} shows that, to the contrary, no adversary has this power.

It is a challenging problem to analyze the performance of seed-and-propagate algorithms, but the maximal inequality is a favorable indication for the research program aimed at showing their effectiveness.

Regarding seed-and-propagate on the hypercube we only add that seeding at a single vertex is insufficient, as the graph is sufficiently weakly connected that a sub-linear number of edge removals suffices to partition it into components each of sub-linear size. However multiple seedings remains a very viable strategy.

\subsection{Possible generalizations}
Let $G=(V,E)$ be any finite connected graph, with shortest-path metric
$d_G$. Let $G^{\square n}$ be the $n$th Cartesian power of $G$, the
graph on $V^n$ in which $(v,w)$ is an edge if there is a unique $i$
for which $(v_i,w_i) \in E$. The shortest path metric on $G^{\square
  n}$ is therefore the $L_1$ metric induced by $d_G$. Spherical
operators and spherical maximal operators are now defined, and we
conjecture that \thmref{main} holds for a suitable constant $A_G$.
Our proof techniques would require modification even for the next
simplest cases of $G$ taken to be the path or the complete graph on
three vertices.

In a different direction, the existence of a dimension-free bound for
all $\II^n$ begs the question whether there is a natural limit object
$T$, such that for every $\II^n$ there exists a single morphism $\II^n
\rightarrow T$ and thus each $n$ occurs in it as a special case.

\subsection{Proof overview}
Our proof is in two main steps, in each of which we obtain a
maximal inequality for one class of operators
based on comparison with another more tractable class. To
introduce the first of these reductions we need to define
the \textit{senate operator}\footnote{The terminology is to
  express that (as in the United States Senate) each block
  has equal weight without regard to size.}
 $\Sen$. Let $\cT=\{T_k\}$ be any
family of operators indexed by a parameter $k$
which varies over an interval $[0,a]$ ($a$ possibly
infinite) of either nonnegative reals or nonnegative
integers. (E.g., $\cS=\{S_k\}_0^n$ as above.) Then the family
$\Sen(\cT)=\{\Sen(\cT)_k\}$, indexed by $k$ in the same range,
consists of the operators
\[ \Sen(\cT)_k=\frac{1}{k+1}\sum_{\ell=0}^k T_\ell \quad \textrm{ or } \quad
\Sen(\cT)_k=\frac{1}{k}\int_{0}^k T_\ell \; d\ell \]
depending as $k$ ranges over integers or reals, and taking
the limit from above at $k=0$ in the continuous case.

In the first step of our argument we follow a comparison
method due to Stein~\cite{Stein61} to show that
\def\propSenS{
$\|M_{\cS}\|_{2 \to 2} \in O(1+\|M_{\Sen(\cS)}\|_{2 \to 2})$.}
\begin{prop}\label{prop:S-SenS}\label{PROP:S-SENS}
\propSenS
\end{prop}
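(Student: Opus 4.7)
The natural approach, following Stein's comparison method \cite{Stein61}, is to relate each $S_k$ to its Cesàro average $B_k := \Sen(\cS)_k = \frac{1}{k+1}\sum_{\ell=0}^k S_\ell$ via a telescoping identity. Expanding the two sums,
\[
(k+1)B_k - k B_{k-1} \;=\; \sum_{\ell=0}^k S_\ell - \sum_{\ell=0}^{k-1} S_\ell \;=\; S_k,
\]
which rearranges to $S_k - B_k = k(B_k - B_{k-1})$. Therefore, pointwise,
\[
\pi_x S_k f \;\leq\; \pi_x B_k f + k\,|\pi_x(B_k - B_{k-1})f|,
\]
and taking $\sup_k$ yields
\[
M_\cS f \;\leq\; M_{\Sen(\cS)} f + \Omega(f), \qquad \Omega(f)(x) := \sup_{k \geq 1} k\,|\pi_x(B_k - B_{k-1})f|.
\]
By the triangle inequality in $L^2$, it then suffices to prove an absolute bound $\|\Omega(f)\|_2 \leq C\|f\|_2$, which immediately gives $\|M_\cS\|_{2\to 2} \leq C + \|M_{\Sen(\cS)}\|_{2\to 2}$.

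\textbf{Spectral reduction.} To analyze $\Omega(f)$, I would simultaneously diagonalize. The operators $\{S_k\}$ pairwise commute and share the Walsh eigenbasis $\{\chi_S\}_{S\subseteq[n]}$, with $S_k\chi_S = s_k(|S|)\chi_S$ for $s_k(j)$ a normalized Krawtchouk polynomial, and correspondingly $B_k \chi_S = \beta_k(|S|)\chi_S$ with $\beta_k(j) = \frac{1}{k+1}\sum_{\ell=0}^k s_\ell(j)$. The scalar form of the telescoping identity is $k(\beta_k(j) - \beta_{k-1}(j)) = s_k(j) - \beta_k(j)$. Since both $S_k$ and $B_k$ are doubly stochastic (so $\|S_k\|_{2\to 2} = \|B_k\|_{2\to 2} = 1$, with spectra in $[-1,1]$), $|s_k(j)|, |\beta_k(j)| \leq 1$, so this scalar is bounded by $2$ uniformly in $n,k,j$. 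In particular, for a pure character $f=\chi_S$ one has $\Omega(f) \leq 2|f|$ pointwise, and the target estimate is immediate.

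\textbf{Main obstacle.} Upgrading this from characters to arbitrary $f$ is where the work lies. The naive square-function bound
\[
\|\Omega(f)\|_2^2 \;\leq\; \sum_{k=1}^n k^2\|(B_k - B_{k-1})f\|_2^2 \;=\; \sum_{j=0}^n \|f_j\|_2^2 \sum_{k=1}^n (s_k(j) - \beta_k(j))^2,
\]
where $f_j$ is the weight-$j$ component of $f$, is \emph{not} dimension-free: at $j=n$ one has $s_k(n)=(-1)^k$, so the inner sum is $\Theta(n)$, whereas $\Omega$ applied to such a component has bounded $L^2$ norm. The obstruction is that one cannot afford to replace $\sup_k$ by an $\ell^2$-sum; one must exploit the fact that for each $j$ the scalar sequence $k\mapsto s_k(j)-\beta_k(j)$ is effectively localized to a single scale and exhibits sign cancellation. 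The intended remedy is a dyadic decomposition of the range of $k$ into blocks $[2^m, 2^{m+1})$, controlling each block-maximum by a Krawtchouk-adapted local square function, and then summing geometrically in $m$. Proving such a per-scale estimate with a constant independent of $n$ is the main obstacle; once it is in hand, the reduction of the opening paragraph delivers the proposition.
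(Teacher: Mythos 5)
Your opening reduction is sound and is indeed the Stein-style comparison the paper uses, but the proof does not close: the estimate $\|\Omega(f)\|_2 \leq C\|f\|_2$ --- which is the entire content of the proposition beyond the one-line telescoping identity --- is left as an acknowledged ``main obstacle,'' and the proposed remedy (dyadic blocks plus an unspecified ``Krawtchouk-adapted local square function'') is a plan, not an argument. The fix is also not a dyadic decomposition. What the paper does instead is take the other branch of Abel summation: rather than $S_k - B_k = k(B_k-B_{k-1})$, write $S_{2r}-\frac{1}{r+1}\sum_{k=0}^{r}S_{2k}=\frac{1}{r+1}\sum_{k=1}^{r}k(S_{2k}-S_{2(k-1)})$ and apply Cauchy--Schwarz with the split $\frac{\sqrt k}{r+1}\cdot\sqrt k$, using $\sum_{k=1}^{r}k/(r+1)^2\le 1/2$. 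This dominates $[\pi_x(S_{2r}-\Sen(\cS)_{2r})f]^2$, \emph{uniformly in $r$}, by the single square function $\frac12\sum_k k[\pi_x(S_{2k}-S_{2(k-1)})f]^2$ --- exactly the $r$-uniformity your $\sup_k$ lacks --- and its building blocks are differences of consecutive \emph{spherical means of equal parity}, not of consecutive Ces\`aro averages.

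Two further ingredients you are missing are forced by your own $j=n$ counterexample. First, the even/odd parity splitting (together with a preliminary antipodal reduction to radii $k\le n/2$, costing a factor $\sqrt2$) is what produces the needed cancellation: at $|y|=n$ one has $\kappa_{2k}(n)-\kappa_{2k-2}(n)=0$, whereas your mixed-parity differences $B_k-B_{k-1}$ see the full oscillation. Second, bounding the resulting square function dimension-freely, i.e.\ showing $\sum_k k\bigl(\kappa_{2k}^{(n)}(x)-\kappa_{2k-2}^{(n)}(x)\bigr)^2\le C$ for all $x\le n/2$, requires genuine Krawtchouk analysis: the identity $\kappa_x^{(n)}(\ell)-\kappa_x^{(n)}(\ell-2)=-\frac{4x(n-x)}{n(n-1)}\kappa_{x-1}^{(n-2)}(\ell-2)$ (which extracts the crucial factor $x/n$), combined with the decay estimate $|\kappa_k^{(n)}(x)|\le e^{-ckx/n}$ that the paper proves separately via orthogonality and root-location bounds. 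None of this is present or substituted for in your proposal, so as written the argument establishes only the easy reduction, not the proposition.
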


Bounds on the Krawtchouk polynomials play a key role in this
argument. We will introduce the polynomials and prove these bounds in
\sect{krawt}, and then use them to prove \propref{S-SenS} in \sect{S-SenS}.

To introduce the second reduction we need to define the
family of stochastic \textit{noise operators} $\cN=\{N_t\}_{t
  \geq 0}$ indexed by real $t$. Letting $p=(1-e^{-t})/2$, we
set
\[ N_t=\sum_{k=0}^n \binom{n}{k} p^k(1-p)^{n-k} S_k \]
This has the following interpretation. $\pi_xN_tf$ is the
expectation of $\pi_y f$ where $y$ is obtained by running $n$
independent Poisson processes with parameter $1$ from time
$0$ to time $t$, and re-randomizing the $i$th bit of $x$ as many
times as there are events in the $i$th Poisson process. The
$N_t$'s form a semigroup: $N_{t_1}N_{t_2}=N_{t_1+t_2}$. The
process is equivalent to a Poisson-clocked random
walk on the hypercube.

We show (in \sect{senates}) by a direct pointwise comparison that:
\def\propsenates{
$\|M_{\Sen(\cS)}\|_{2 \to 2} \in O(\|M_{\Sen(\cN)}\|_{2 \to 2})$.}
\begin{prop}\label{prop:senates}\label{PROP:SENATES}
\propsenates
 \end{prop}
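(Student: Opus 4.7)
The plan is to prove the pointwise inequality $M_{\Sen(\cS)} f(x) \leq C \cdot M_{\Sen(\cN)} f(x)$ for all $x$ and all nonnegative $f$; the general case reduces to this because each $S_\ell$ has a nonnegative kernel, so $|M_\cA f| \leq M_\cA |f|$ for any family $\cA$ of nonnegative combinations of the $S_\ell$'s. Since both $\Sen(\cS)_k$ and $\Sen(\cN)_t$ are nonnegative combinations of the $\{S_\ell\}$, the desired pointwise inequality reduces to a comparison of coefficients: it suffices to dominate, up to a universal constant, the coefficient $1/(k+1)$ of $\Sen(\cS)_k$ on each $S_\ell$ with $\ell\le k$ by the coefficient
\[
c_\ell(t) \;=\; \frac{1}{t}\int_0^t \binom{n}{\ell}\, p(s)^\ell (1-p(s))^{n-\ell}\, ds
\]
of $\Sen(\cN)_t$ for an appropriately chosen $t$.

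Changing variables to $q = p(s) = (1-e^{-s})/2$ and using the classical beta identity $\int_0^1 \binom{n}{\ell} q^\ell (1-q)^{n-\ell}\, dq = 1/(n+1)$, one rewrites
\[
c_\ell(t) \;=\; \frac{2}{t(n+1)}\,\mathbb{E}\!\left[\frac{\ind{Q_\ell \leq p(t)}}{1 - 2 Q_\ell}\right], \qquad Q_\ell \sim \text{Beta}(\ell+1,\,n-\ell+1),
\]
where $Q_\ell$ is concentrated near $\ell/n$ with standard deviation $\Theta(\sqrt{\ell(n-\ell)}/n^{3/2})$. For $k\le n/2$, I will partition $[0,k]$ into a collection of buckets $\{J_i\}$ and choose, for each bucket, a time $t_i$ so that $c_\ell(t_i) \geq c/|J_i|$ holds uniformly over $\ell\in J_i$. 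Dyadic buckets with $p(t_i)$ just above the top of $J_i$ work whenever $J_i$ stays away from $n/2$: the event $\{Q_\ell\le p(t_i)\}$ has constant probability and the trivial bound $(1-2Q_\ell)^{-1}\ge 1$ suffices. Near $n/2$, the singular factor $(1-2Q_\ell)^{-1}$ contributes a $\Theta(\sqrt{n})$ boost to the expectation, which compensates the $\Theta(\log n)$ size of $t_i$ provided the bucket width is taken $\Theta(\sqrt{n}\log n)$. Once $\sum_{\ell\in J_i} S_\ell f \leq C |J_i|\,\Sen(\cN)_{t_i} f$ is in hand, summing over $i$ and dividing by $k+1$ (using $\sum_i |J_i|\le k+1$ and $\Sen(\cN)_{t_i} f \leq M_{\Sen(\cN)} f$) yields $\Sen(\cS)_k f \leq C\cdot M_{\Sen(\cN)} f$.

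For $k > n/2$, antipodal symmetry $S_\ell f(x) = S_{n-\ell} f(\bar{x})$ gives the pointwise reduction $\Sen(\cS)_k f(x) \leq \Sen(\cS)_{\lfloor n/2\rfloor} f(x) + \Sen(\cS)_{\lfloor n/2\rfloor} f(\bar{x})$, and the second term contributes the same $L_2$ norm since the antipodal map is an isometry of $\II^n$. The main obstacle will be the uniform coefficient lower bound when $\ell$ and $k$ are simultaneously close to $n/2$: no single choice of $t$ works for all such $\ell$, and the widths of the buckets, the values of $t_i$, and the size of the Beta singularity $(1-2Q_\ell)^{-1}$ must be tuned in concert to yield a dimension-free constant.
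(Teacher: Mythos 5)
Your overall architecture---reduce to nonnegative $f$, handle $k>\lfloor n/2\rfloor$ by the antipodal map, then dominate the coefficient of each $S_\ell$ in $\Sen(\cS)_k$ by its coefficient in suitable noise senates---matches the paper, and your identity $c_\ell(t)=\frac{2}{t(n+1)}\,\mathbb{E}\bigl[\ind{Q_\ell\le p(t)}(1-2Q_\ell)^{-1}\bigr]$ is correct. But the core comparison is genuinely different. The paper never buckets: it reparameterizes the noise by $p$ rather than $t$, defining $\tilde N_p$ and $\Sen(\tilde\cN)_P=\frac1P\int_0^P\tilde N_p\,{\rm d}p$, and proves two lemmas: (i) $\Sen(\SB)_K\le C\,\Sen(\tilde\cN)_{P_K}$ for the \emph{single} value $P_K=\min((K+\sqrt K)/n,\tfrac12)$, using exactly the window estimate you describe ($B(n,p,k)\gtrsim 1/\sqrt k$ for $p\in[k/n,(k+\sqrt k)/n]$); and (ii) $\Sen(\tilde\cN)\lesssim\Sen(\cN)$ by Abel summation, using only that the change of variables weights $N_t$ by the decreasing density $e^{-t}/2P$. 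Step (ii) is the move you are missing: $1/P_K$ is comparable to $n/(K+1)$ uniformly, so the normalization never degenerates, whereas your $1/t$ costs a factor $t=\Theta(\log n)$ as soon as $p(t)$ approaches $1/2$.

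That cost is exactly where your sketch has a gap, and the parameters you propose do not close it. For $\ell=n/2-m$ with $\sqrt n\ll m$, the variable $1-2Q_\ell$ concentrates at $2m/n$ with standard deviation $\Theta(1/\sqrt n)\ll 2m/n$, so there is no mass near $0$ to integrate logarithmically: the boost is $\Theta(n/m)$, not $\Theta(\sqrt n)$, and $c_\ell(t)=\Theta(1/(m\,t))$ with $t\gtrsim\ln(n/m_{\min})$ forced by the indicator. Hence the uniform requirement $c_\ell(t_i)\ge c/|J_i|$ demands $|J_i|\gtrsim m_{\max}\ln(n/m_{\min})$ for a bucket whose distances to $n/2$ lie in $[m_{\min},m_{\max}]$; since $m_{\max}\ge m_{\min}+|J_i|-1$, this is infeasible (for a dimension-free $c$) once $m_{\min}\le ne^{-1/c}$, and in particular a single bucket of width $\Theta(\sqrt n\log n)$ with one $t_i$ fails at its outer edge by a factor $\log n$. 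The repair within your framework is to abandon the uniform per-bucket condition and verify only the aggregate condition $\sum_i\bigl(\min_{\ell\in J_i}c_\ell(t_i)\bigr)^{-1}\le C(k+1)$: with buckets dyadic in $m=n/2-\ell$ the per-bucket costs are $\approx 2^{j+1}(\log_2 n-j)$, which sum to $O(n)$, and such buckets arise only when $k=\Omega(n)$. So your route can be completed, but it requires this extra accounting that your sketch leaves open; the paper's reparameterization avoids it entirely.
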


Finally (see~\cite{Krengel85}) $\|M_{\Sen(\cN)}\|_{2 \to 2}\leq 2\sqrt 2$
(indeed, $\|M_{\Sen(\cN)}\|_{p\to p} \leq (p/(p-1))^{1/p}$ for $p>1$)
by previous results: the Hopf-Kakutani-Yosida maximal
inequality and Marcinkiewicz's interpolation theorem.
For the reader's convenience, we restate these results
in the Appendix.

Combining these results, we have \thmref{main} by
$$ \|M_{\cS}f\|_2 \in O(\|f\|_2+\|M_{\Sen(\cS)}f\|_2) \subseteq
O(\|f\|_2+\|M_{\Sen(\cN)}f\|_2) \subseteq O(\|f\|_2).$$

\begin{remark}
While our main result is in terms of the $2\ra 2$ norm, many of our
techniques generalize to other norms.  Here we are
limited by \propref{S-SenS}, which does not conveniently generalize to
other norms. Very recently, this difficulty has been overcome by Krause; for a preliminary version of his work see~\cite{Krause-arxiv}.
On
the other hand, $\|M_{\cS}\|_{1\ra 1}=n+1$, as can be seen by taking
$f$ to be nonzero only on a single point.
\end{remark}

We are concerned in this paper solely with maximal operators
for sets $\cA$ of nonnegative matrices. For any such maximal
operator $M_\cA$, $|\pi_x M_\cA f| \leq \pi_x M_\cA |f|$. So
it suffices to show Theorem~\ref{thm:main} for nonnegative
$f$; this simplifies some expressions and will be assumed
throughout.

\section{Fourier analysis and Krawtchouk polynomials}
\label{sec:krawt}

For $y\in \II^n$, define the character $\chi_y\in \RR^{\II^n}$ by
$\pi_x\chi_y=(-1)^{x\cdot y} / \sqrt{2^n}$.    The
normalization is chosen so that the $\chi_y$ form an orthonormal
basis of $\RR^{\II^n}$. This basis simultaneously
diagonalizes each $S_k$, as they commute with $\II^n$ as an abelian group.  A
direct calculation (see also~\cite{FK02}) shows that
\be S_k \chi_y =
\kappa_k^{(n)}(|y|) \chi_y,
\label{eq:S_k-eig}\ee
where $|y|=|\{i:y_i =1\}|$, and $\kappa_k^{(n)}(|y|)$ is the
normalized $k^{\text{th}}$ Krawtchouk polynomial, defined by
\be \kappa_k^{(n)}(x) =
\sum_{j=0}^k (-1)^j \frac{\binom{x}{j}\binom{n-x}{k-j}}{\binom{n}{k}}
\label{eq:krawt-def}\ee

We collect here some facts about Krawtchouk polynomials.
\begin{lemma}\label{lem:krawt-prop}~
\begin{enumerate}
\item \label{k1} {\em $k$-$x$ Symmetry:} $\kappa_k^{(n)}(x)
  = \kappa_x^{(n)}(k)$.
\item \label{k2} {\em Reflection symmetry:}
  $\kappa_k^{(n)}(n-x) = (-1)^k \kappa_k^{(n)}(x)$.
\item \label{k3} {\em Orthogonality:}
\be\sum_{x=0}^{n} \kappa_k^{(n)}(x) \kappa_\ell^{(n)}(x) \binom{n}{x}
= \frac{2^n}{\binom{n}{k}} \delta_{k,\ell}.
\label{eq:krawt-ortho}\ee
\item \label{k4} {\em Roots:} The roots of
 $\kappa_k^{(n)}(x)$ are real, distinct, and lie
 in the range $n/2 \pm \sqrt{k(n-k)}$.
\end{enumerate}
\end{lemma}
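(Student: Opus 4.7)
The plan is to get (\ref{k1}) and (\ref{k2}) from the generating function
$$g(z,x) \;=\; (1-z)^x(1+z)^{n-x} \;=\; \sum_{k=0}^{n}\binom{n}{k}\kappa_k^{(n)}(x)\,z^k,$$
which is immediate by applying the binomial theorem to each factor and collecting. For (\ref{k2}), note that $g(-z,n-x)=(1+z)^{n-x}(1-z)^x=g(z,x)$; equating the coefficients of $z^k$ on the two sides gives $(-1)^k\kappa_k^{(n)}(n-x)=\kappa_k^{(n)}(x)$. For (\ref{k1}), I would pass to the bivariate generating function
$$H(z,w)\;=\;\sum_{x=0}^n\binom{n}{x}g(z,x)w^x \;=\;\bigl((1+z)+w(1-z)\bigr)^n \;=\;(1+z+w-zw)^n,$$
which is manifestly symmetric in $z$ and $w$. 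Matching the coefficient of $z^k w^x$ in $H(z,w)=H(w,z)$ gives $\binom{n}{x}\binom{n}{k}\kappa_k^{(n)}(x)=\binom{n}{k}\binom{n}{x}\kappa_x^{(n)}(k)$, i.e., (\ref{k1}).

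For the orthogonality (\ref{k3}) I would compute $\mathrm{tr}(S_kS_\ell)$ in two bases. In the Fourier basis $\{\chi_y\}$, equation \eqref{eq:S_k-eig} makes $S_k$ diagonal with eigenvalue $\kappa_k^{(n)}(|y|)$, so
$$\mathrm{tr}(S_kS_\ell)\;=\;\sum_{y\in\II^n}\kappa_k^{(n)}(|y|)\kappa_\ell^{(n)}(|y|) \;=\;\sum_{x=0}^n\binom{n}{x}\kappa_k^{(n)}(x)\kappa_\ell^{(n)}(x).$$
In the standard basis, a direct counting argument gives $(S_kS_\ell)_{v,v}=\binom{n}{k}^{-1}\binom{n}{\ell}^{-1}|\{u:d(v,u)=k,\,d(u,v)=\ell\}|$, which vanishes unless $k=\ell$, in which case it equals $\binom{n}{k}^{-1}$. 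Summing over the $2^n$ vertices yields $\mathrm{tr}(S_kS_\ell)=2^n\delta_{k,\ell}/\binom{n}{k}$, and (\ref{k3}) follows.

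For (\ref{k4}), the fact that the roots are real and simple is a standard consequence of (\ref{k3}): it identifies $\{\kappa_k^{(n)}\}$ as an orthogonal polynomial family with respect to the positive binomial measure $\binom{n}{x}$ on $\{0,1,\dots,n\}$. To get the quantitative location, I would differentiate $g(z,x)$ in $z$ to obtain the identity $(1-z^2)\partial_z g=(n-2x-nz)g$, extract the three-term recurrence
$$(n-k)\kappa_{k+1}^{(n)}(x)\;=\;(n-2x)\kappa_k^{(n)}(x)\;-\;k\kappa_{k-1}^{(n)}(x),$$
and rescale to the orthonormal basis $\tilde\kappa_k=\sqrt{\binom{n}{k}/2^n}\,\kappa_k$, getting the symmetric tridiagonal Jacobi form with diagonal $n/2$ and sub-diagonal $\tfrac{1}{2}\sqrt{k(n-k+1)}$. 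The roots of $\kappa_k^{(n)}$ are then the eigenvalues of the principal $k\times k$ block, and the desired interval $n/2\pm\sqrt{k(n-k)}$ is the bound on the spectrum of the shifted block. Bounding the norm of this truncated tridiagonal operator is the main obstacle, because the naive Gershgorin/entrywise bound is too loose; I would do it either by a Christoffel--Darboux style positivity argument tailored to Krawtchouk polynomials, or by invoking the classical Levenshtein root bound, which is exactly tuned to give the $\sqrt{k(n-k)}$ scaling.
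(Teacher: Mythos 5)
Your proposal is correct, but it is worth noting that the paper does not actually prove this lemma: it dismisses items \ref{k1}--\ref{k3} as ``straightforward'' with a pointer to Levenshtein's survey, and obtains item \ref{k4} as a weakening of Theorem~8 of Krasikov--Litsyn. Your argument therefore supplies more than the paper does. Parts \ref{k1} and \ref{k2} via the generating function $(1-z)^x(1+z)^{n-x}=\sum_k\binom{n}{k}\kappa_k^{(n)}(x)z^k$ and its symmetric bivariate lift $(1+z+w-zw)^n$ check out exactly. Your orthogonality proof by computing $\mathrm{tr}(S_kS_\ell)$ in the Fourier and standard bases is clean and arguably more in the spirit of the paper than the usual character-sum manipulation; just be aware it leans on \eqref{eq:S_k-eig}, which the paper asserts as a ``direct calculation,'' so the two facts should not be derived from each other circularly (in practice \eqref{eq:S_k-eig} is itself immediate from the same generating function, so this is fine). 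For part \ref{k4}, your three-term recurrence $(n-k)\kappa_{k+1}^{(n)}(x)=(n-2x)\kappa_k^{(n)}(x)-k\kappa_{k-1}^{(n)}(x)$ and the resulting Jacobi matrix (diagonal $n/2$, off-diagonals $\tfrac12\sqrt{j(n-j+1)}$) are correct, and you are right that Gershgorin on the truncated block only yields roughly $n/2\pm\sqrt{k(n-k+O(1))}$, which narrowly misses the stated constant; the sharp bound $\sqrt{k(n-k)}$ genuinely requires the Levenshtein/Krasikov argument you cite. Since the paper also outsources exactly this step, your deferral there is not a gap relative to the paper --- though if one only wanted a self-contained proof of \lemref{krawt-bound}, the slightly weaker Gershgorin-type root bound would in fact suffice after nudging the threshold $0.14$.
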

The proofs of the first three claims are straightforward (see~\cite{Leven95}), and the fourth claim is a weaker version of Theorem~8 of~\cite{Krasi01}.
We sometimes abbreviate $\kappa_k(x) =\kappa_k^{(n)}(x)$.

Before going into further technical detail, we give an overview
of our goals in this section.  As we have noted in \sect{intro}, maximal
inequalities are easily proved for semigroups, such as the noise
operators $N_t$. In some ways $S_k$ resembles $N_{k/n}$, since
$N_{t}$ is approximately an average of $S_k$ for $k = nt \pm
\sqrt{nt(1-t)}$. While direct comparison is difficult (e.g. writing $S_k$
as a linear combination of $N_t$ necessarily entails large
coefficients), we can argue that the spectra of these operators should
be qualitatively similar.

Indeed, the $N_t$ are also diagonal in the $\chi_y$ basis, and for
$|y|=x$, their eigenvalue for $\chi_y$ is $(1-2t)^x$. Thus,
our goal in this section will be to show that $\kappa_k(x)$ has
similar behavior\footnote{This qualitative similarity breaks down when
$k$ (or $x$) is close to $n/2$, although this case will not be the
main contribution to our bounds.  See the introduction to~\cite{RegevK11}
for discussion of the properties of the $S_{n/2}$ operator.} to
$(1-2k/n)^x$.   More precisely, we prove

\begin{lemma}\label{lem:krawt-bound}
There is a constant $c>0$ such that for all $n$ and
integer $0\leq x,k \leq n/2$,
\be |\kappa_k^{(n)}(x)| \leq e^{-ckx/n}.
\label{eq:krawt-bound}\ee
\end{lemma}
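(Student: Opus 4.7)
We prove the lemma by a saddle-point estimate on the generating function of the Krawtchouk polynomials. The defining formula (\ref{eq:krawt-def}) together with the Vandermonde convolution yields
$$(1-z)^x(1+z)^{n-x}=\sum_{k=0}^n\binom{n}{k}\,\kappa_k^{(n)}(x)\,z^k.$$
Extracting the $k$th coefficient by Cauchy's integral formula on the contour $|z|=r$, $r\in(0,1)$, and bounding the integrand pointwise on the circle gives
$$\binom{n}{k}\,\bigl|\kappa_k^{(n)}(x)\bigr|\;\le\;\frac{1}{r^k}\,\max_{|z|=r}\bigl|(1-z)^x(1+z)^{n-x}\bigr|.$$
Parametrizing $z=re^{i\theta}$ with $u=\cos\theta$, a short computation on $|1-z|^x|1+z|^{n-x}=(1-2ru+r^2)^{x/2}(1+2ru+r^2)^{(n-x)/2}$ shows that, for $x\le n/2$ and $r$ below the threshold $r_0=(n-2\sqrt{x(n-x)})/(n-2x)$, the maximum on the circle is attained at $u=1$, so the bound simplifies to $(1-r)^x(1+r)^{n-x}/r^k$.

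I then minimize over $r$. The first-order condition is the quadratic $(n-k)r^2-(n-2x)r+k=0$, whose real roots in $(0,1)$ exist precisely when $x\le n/2-\sqrt{k(n-k)}$: the complement of the root region of $\kappa_k^{(n)}$ (Lemma~\ref{lem:krawt-prop}). One checks that the optimizer $r^*$ always satisfies $r^*\le r_0$, so the $u=1$ reduction remains valid. Substituting $r^*$ back in and combining with Stirling's estimate for $\binom{n}{k}$ (equivalently, with the Gaussian approximation to the binomial distribution at its mode) yields, after routine computation, a bound of the shape
$$\bigl|\kappa_k^{(n)}(x)\bigr|\;\le\;C\sqrt{k}\,(1-2x/n)^k\;\le\;C\sqrt{k}\,e^{-2kx/n}$$
throughout the outside-of-roots regime.

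The main obstacle is then absorbing the polynomial prefactor $\sqrt{k}$ into the exponential uniformly in $(n,k,x)$, and covering the inside-of-roots regime where the saddle point collapses. I would handle both via a three-case split. (i) When $kx/n\ge A\log k$ for a suitable constant $A$, one has $\sqrt{k}\le e^{\varepsilon kx/n}$, so the bound holds with $c=2-2\varepsilon$. (ii) When $kx/n=O(1)$, the target $e^{-ckx/n}$ is bounded away from $0$; here the trivial bound $|\kappa_k^{(n)}(x)|\le 1$ (valid because $S_k$ is stochastic) together with a first-order expansion of the generating function in $x$ (or, equivalently, the $k$--$x$ symmetry of Lemma~\ref{lem:krawt-prop} reducing to small $k$) gives $|\kappa_k^{(n)}(x)|\le 1-\Omega(kx/n)$, which suffices. (iii) In the inside-of-roots regime $x>n/2-\sqrt{k(n-k)}$, Parseval applied to the orthogonality relation (\ref{eq:krawt-ortho}) gives $\kappa_k^{(n)}(x)^2\le 2^n/\bigl(\binom{n}{k}\binom{n}{x}\bigr)$, which is far smaller than $e^{-ckx/n}$ whenever both $k$ and $x$ are of order $n$ (the only case in which the inside-roots region contributes a nontrivial target), and in the complementary range $kx/n=O(1)$ the trivial bound again suffices.
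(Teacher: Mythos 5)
Your overall strategy---the contour/saddle-point bound on the generating function $(1-z)^x(1+z)^{n-x}=\sum_k\binom{n}{k}\kappa_k^{(n)}(x)z^k$---is the classical route to Krawtchouk asymptotics and is genuinely different from the paper's argument, which instead factors $\kappa_k$ over its roots, normalizes at $x=0$ (where $\kappa_k=1$ exactly), and bounds each factor $|z^2-y_i^2|/(1-y_i^2)$ by $\max(z^2,0.93)$ using the root-location bound; the paper handles the inside-of-roots regime by orthogonality plus Stirling, as you do in your case (iii). However, your write-up has a genuine gap, and it is exactly at the point you flag as ``the main obstacle.'' The Cauchy estimate $|\oint|\le\max|\cdot|$ is intrinsically lossy by a factor of order $\sqrt{k(n-k)/n}$ (at $x=0$ it gives $|\kappa_k(0)|\le e^{nH_2(k/n)}/\binom{n}{k}\approx\sqrt{k}$ rather than $1$), so any bound obtained this way carries a polynomial prefactor. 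Your three cases do not cover the window where $kx/n\to\infty$ but $kx/n=o(\log k)$ outside the root region: case (i) needs $kx/n\ge A\log k$, case (ii) needs $kx/n=O(1)$, and case (iii) needs $x>n/2-\sqrt{k(n-k)}$. Concretely, take $k=\sqrt{n}$ and $x=\sqrt{n}\,\log\log n$. This is outside the root region, $kx/n=\log\log n\to\infty$, and $\log k=\tfrac12\log n\gg\log\log n$, so none of your cases applies; your bound $C\sqrt{k}\,e^{-2kx/n}=Cn^{1/4}(\log n)^{-2}\to\infty$ while the target is $e^{-ckx/n}=(\log n)^{-c}\to 0$. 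The $k$-$x$ symmetry does not rescue this example since $k\approx x$ there.

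The true bound does hold in that window (the paper's prefactor-free estimate $\mu_k^2(z)\le(\max(z^2,0.93))^k$ gives $(1-2x/n)^{2k}\le e^{-4kx/n}$ directly), so what is missing is a version of your estimate without the $\sqrt{k}$ loss when $kx/n$ is small. Two standard fixes: either evaluate the contour integral more carefully (the integrand concentrates on an arc of width $\Theta(1/\sqrt{k})$ about $\theta=0$, which recovers the $1/\sqrt{k}$ you are losing), or normalize multiplicatively as the paper does, bounding $|\kappa_k(x)/\kappa_k(0)|$ rather than $|\kappa_k(x)|$ and $\kappa_k(0)$ separately. Two smaller issues: in case (ii), ``the trivial bound suffices'' is not literally true when $kx/n=\Theta(1)$ (you need $|\kappa_k(x)|\le 1-\Omega(kx/n)$, which does hold, e.g.\ via $1\mp\kappa_k(x)=2\pr[|T\cap[x]|\text{ odd/even}]\ge 2\pr[|T\cap[x]|=1\text{ or }0]$ for a uniform $k$-set $T$, but it is a real step, not a triviality); and in case (iii) the inside-of-roots region is $x(n-x)+k(n-k)>n^2/4$, which forces $\max(k,x)=\Omega(n)$ but not $\min(k,x)=\Omega(n)$, so the claim that a nontrivial target arises there only when ``both $k$ and $x$ are of order $n$'' needs to be replaced by a check that $2^n/\bigl(\binom{n}{k}\binom{n}{x}\bigr)\le e^{-2ckx/n}$ whenever $\min(k,x)=\omega(1)$ and $x$ is within $\sqrt{k(n-k)}$ of $n/2$ (this does go through, but is not automatic).
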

Due to \lemref{krawt-prop}.\ref{k1},\ref{k2} it suffices to bound
$\kappa_k(x)$ only when $0 \leq k \leq x \leq n/2$.

\begin{proof}
The main complication in working with Krawtchouk polynomials is that
they have several different forms of asymptotic behavior depending on
whether $x$ and $k$ are in the lower, middle or upper part of their
range; indeed,~\cite{Dom08} breaks the asymptotic properties of
$\kappa_k(x)$ into 12 different cases.
However, for our purpose, we need only two different upper bounds on the Krawtchouk polynomials, based on whether $k/n$ is greater than or less than 0.14; a somewhat arbitrary threshold that we will see the justification for below.

\textbf{Case I: $k,x \geq 0.14n$.}

This is the simpler upper bound, which relies only on the orthogonality
property \lemref{krawt-prop}.\ref{k3}.  Setting $k=\ell$ in
\lemref{krawt-prop}.\ref{k3}
and observing that all of the terms on the LHS are nonnegative, it follows
that
\be \kappa_k^2(x) \leq \frac{2^n}{\binom{n}{k} \binom{n}{x}}.
\label{eq:ortho-bound}\ee

Based on Stirling's formula, Lemma 17.5.1 of~\cite{CT91} states that
\be \binom{n}{np}
\geq\frac{e^{nH_2(p)}}{\sqrt{8p(1-p)n}}
\geq\frac{e^{nH_2(p)}}{\sqrt{2n}}
\label{eq:stirling}\ee
 where $H_2(p):=-p\ln p - (1-p)\ln(1-p)$.

Note that $H_2(0.14) >\ln(2)/2$, so, for $k,x \geq 0.14n$, \eq{ortho-bound} implies that $\kappa_k^2(x) \leq 2n e^{(\ln(2)-2H_2(0.14))n}
\leq 2n e^{-c_1n}$
for some $c_1>0.116$. Now let $ n_0$ be sufficiently large ($n_0=100$ suffices) that
$ c_1 \geq 2\ln(2 n_0)/n_0$; then
for all $n \geq n_0$,
$2 n e^{-c_1n} \leq e^{-c_1n/2}$. So for all $n \geq n_0$
and all $0.14n \leq k,x \leq n/2$, $\kappa_k^2(x) \leq
 e^{-2c_1(n/2)(n/2)/n} \leq e^{-2c_1kx/n}$.

 To handle the $n<n_0$ case, we define
 \[ c_2=\min\{ -(n/kx)\ln |\kappa^{(n)}_k(x)| : 1 \leq k \leq x
 \leq n/2, n < n_0\}.\]
 It is immediate from
 Definition~\eq{krawt-def} that $|\kappa^{(n)}_k(x)|<1$ if
 $1 \leq k,x \leq n-1$, so $c_2>0$.

   Finally, the lemma
 follows with $c=\min\{2c_1,c_2\}$.

\textbf{Case II: $k \leq 0.14n$ or $x\leq 0.14n$.}

By the symmetry between $k$ and $x$, we can assume WLOG that $k\leq 0.14n$.

It is convenient to make the change of variable
\[ x = (1-z)n/2, \]
set
\[ \mu_k(z) =\kappa_k ((1-z)n/2), \]
and expand $\mu_k$ as $\mu_k(z) = \sum_{i=0}^k \alpha_{k,i} z^i$.
$\mu_k$ is either symmetric or anti-symmetric about $0$, and
we focus on bounding $|\mu_k|$ in the range $0\leq z\leq 1$
corresponding to $0 \leq x \leq n/2$.

Let $y_1,\ldots,y_k$ be the roots of $\mu_k$.  By \lemref{krawt-prop}.\ref{k2} the multiset $\{y_1,\ldots,y_k\}$ is identical to the multiset $\{-y_1,\ldots,-y_k\}$.
 So we can write
\be \mu_k^2(z) = \alpha_{k,k}^2
\prod_{i=1}^{k} (z^2-y_i^2).
\label{eq:mu_k-roots}\ee
It is immediate from Definition~\eq{krawt-def} that
\be \mu_k^2(1) = 1
\label{eq:mu_k-1}\ee

Furthermore, by \lemref{krawt-prop}.\ref{k4},
\be  y_{\max} := \max_i y_i
\leq\frac{2}{n}\sqrt{k(n-k)}
\label{eq:roots-bound}\ee

We now obtain an upper bound on $\mu_k^2(z)$
simply by maximizing \eq{mu_k-roots} subject to the constraints
\eq{mu_k-1} and \eq{roots-bound}. Observe that
$$\mu_k^2(z) = \frac{\mu_k^2(z)}{\mu_k^2(1)} =
 \prod_{i=1}^k
\frac{z^2-y_i^2}{1-y_i^2}.$$
Consider the problem of choosing $y_i$ to maximize a single term
$|z^2-y_i^2| / (1-y_i^2)$.
Observe that
\be \frac{\partial}{\partial y_i^2} \frac{z^2-y_i^2}{1-y_i^2} =
\frac{z^2-1}{(1-y_i^2)^2} \leq 0. \label{eq:deriv-neg}\ee
As a result, the maximum over $|y_i|< z$ is found at $y_i=0$ and the
maximum over $|y_i| > z$ is found at $y_i=y_{\max}$.  In the former
case, $|z^2-y_i^2|/(1-y_i^2)=z^2$. In the latter case,
$$ \frac{|z^2-y_i^2|}{1-y_i^2}
\leq \frac{y_{\max}^2-z^2}{1-y_{\max}^2}
\leq \frac{y_{\max}^2}{1-y_{\max}^2}
\leq 0.93.
$$
The last inequality uses the fact that $y_{\max} \leq
2\sqrt{0.14 \cdot 0.86}$ (recalling that $k\leq 0.14n$). So
$|z^2-y_i^2|/(1-y_i^2) \leq \max(z^2, 0.93)$,
implying that
\be \mu_k^2(z) \leq
 (\max(z^2, 0.93))^k \ee

If $z^2\geq 0.93$ then we use $z=1-2x/n \leq
e^{-2x/n}$ to obtain $\kappa_k^2(x) \leq e^{-4xk/n}$.

If $z^2\leq 0.93$ then (recalling $x \leq n/2$) we have
$\kappa_k^2(x) \leq (0.93)^k
\leq (0.93)^{2xk/n} = e^{-cxk/n}$ for $c=-2\ln(0.93)$.
\end{proof}

The threshold of 0.14 used here could be replaced by any $p$ satisfying $H_2(p)>1/2 > \sqrt{p(1-p)}$.

\section{Senates dominate dictatorships: proof of
Proposition~\ref{PROP:S-SENS}}\label{sec:S-SenS}
\begin{repprop}{prop:S-SenS}
\propSenS
\end{repprop}

We start by defining $\SB=\{S_k\}_0^{\lfloor n/2 \rfloor}$.   The
operator  $M_\SB:V  \to V$ is then defined by
\[ \pi_x M_\SB f =\max_{0 \leq k \leq \lfloor n/2 \rfloor} \pi_x S_k
f \]
Define $\iota := S_n$ to be the antipodal operator on
$\RR^{\II^n}$, i.e., the involution obtained by flipping all $n$ bits of the argument of the function. Then
$\pi_x M_{\cS}f = \max\{\pi_xM_\SB f,\pi_x\iota M_\SB f\}$. So
$\|M_{\cS}\|_{2 \to 2} \leq \sqrt{2}
\|M_\SB\|_{2 \to 2}$. Proposition~\ref{prop:S-SenS} therefore
follows from:

\begin{claim}\label{cl:S-SenS}
There is a $C<\infty$ such that for all $n$ and $f$,
$ \|M_\SB f\|_{2} \leq C\|f\|_2+\|M_{\Sen(\SB)}f\|_2$.
\end{claim}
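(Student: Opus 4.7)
The plan is to follow Stein's comparison method and reduce the claim to a square-function estimate that can then be verified using the Krawtchouk bounds of \lemref{krawt-bound}. Starting from $(k+1)T_k=\sum_{\ell=0}^k S_\ell$, a direct telescoping gives
\[ S_k f - T_k f \;=\; \frac{1}{k+1}\sum_{\ell=0}^{k-1}(S_k - S_\ell) f \;=\; \frac{1}{k+1}\sum_{j=1}^{k} j\,(S_j - S_{j-1}) f, \]
so pointwise $M_\SB f \le M_{\Sen(\SB)} f + \sup_k|S_k f - T_k f|$, and Claim~\ref{cl:S-SenS} will follow once I show $\|\sup_k|S_k f - T_k f|\|_2 \le C\|f\|_2$. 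Applying Cauchy--Schwarz to the telescoped sum with symmetric weights $\sqrt j$ produces the $k$-independent pointwise bound $|S_k f - T_k f|(z)\le \tfrac{1}{\sqrt 2}\,Gf(z)$, where
\[ Gf(z)\;:=\;\Bigl(\sum_{j=1}^{\lfloor n/2\rfloor} j\,|(S_j - S_{j-1})f(z)|^2\Bigr)^{1/2} \]
is the natural Littlewood--Paley-style square function attached to the spherical differences.

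The second step is to diagonalize in the character basis and apply Plancherel. Since $S_j\chi_y = \kappa_j(|y|)\chi_y$ and $\{\chi_y\}$ is orthonormal,
\[ \|Gf\|_2^2 \;=\; \sum_{y\in\II^n} |\hat f(y)|^2 \sum_{j=1}^{\lfloor n/2\rfloor} j\,\bigl(\kappa_j(|y|)-\kappa_{j-1}(|y|)\bigr)^2, \]
so the proof reduces to establishing the uniform scalar Krawtchouk inequality
\[ \sum_{j=1}^{\lfloor n/2\rfloor} j\,\bigl(\kappa_j(x)-\kappa_{j-1}(x)\bigr)^2 \;\le\; C' \qquad\text{for every }0\le x\le n,\ n\ge 1. \]

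The main obstacle is this final Krawtchouk estimate. The guiding heuristic is $\kappa_j(x)\approx (1-2j/n)^x$, so a discrete $j$-derivative should give $|\kappa_j(x)-\kappa_{j-1}(x)|\approx (2x/n)(1-2j/n)^{x-1}$, whereupon the target sum becomes comparable to $(x/n)^2\sum_j j\,e^{-cjx/n}$, which is $O(1)$ uniformly in $x$ and $n$ by a geometric-series computation. To convert this into a rigorous bound I plan to combine \lemref{krawt-bound} (exponential magnitude decay $|\kappa_j(x)|\le e^{-cjx/n}$) with a derivative-type estimate of the form $|\kappa_j(x)-\kappa_{j-1}(x)|\lesssim (x/n)\,e^{-c''jx/n}$, obtainable either from the standard three-term recurrence for Krawtchouk polynomials or from differentiating the generating-function identity underlying \eq{krawt-def} in $j$. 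The small-$x$ endpoints ($x\in\{0,1\}$) are handled directly from the explicit formula~\eq{krawt-def}, and the $x>n/2$ range reduces to $x\le n/2$ via the reflection symmetry \lemref{krawt-prop}.\ref{k2}; the intermediate regime is where I expect the case split at threshold $k/n\approx 0.14$ from the proof of \lemref{krawt-bound} to be decisive, since those two cases produce qualitatively different magnitude controls that must be combined to give a single matching bound on the first differences $\kappa_j-\kappa_{j-1}$.
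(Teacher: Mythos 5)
Your overall architecture (Abel summation, Cauchy--Schwarz to a square function, Plancherel, reduction to a scalar Krawtchouk sum) is the right one and matches the paper's, but there is a genuine gap at the decisive step: the scalar inequality you reduce to,
\[ \sum_{j=1}^{\lfloor n/2\rfloor} j\,\bigl(\kappa_j(x)-\kappa_{j-1}(x)\bigr)^2 \;\le\; C' \quad\text{for all } 0\le x\le n, \]
is false. Take $x=n$: by reflection symmetry $\kappa_j^{(n)}(n)=(-1)^j\kappa_j^{(n)}(0)=(-1)^j$, so each squared difference equals $4$ and the sum is $\Theta(n^2)$. More generally, for $x$ near $n$ the Krawtchouk values alternate in sign in $j$, so consecutive differences have magnitude comparable to $2|\kappa_j(n-x)|$ rather than being small. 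Your proposed reduction of the range $x>n/2$ to $x\le n/2$ via \lemref{krawt-prop}.\ref{k2} does not go through for exactly this reason: reflection gives $\kappa_j(n-x)-\kappa_{j-1}(n-x)=(-1)^j\bigl(\kappa_j(x)+\kappa_{j-1}(x)\bigr)$, i.e.\ it converts \emph{differences} into \emph{sums}, which do not decay. (Note the operator statement you are after is still true for such $f$ --- e.g.\ for the parity character $S_kf=(-1)^kf$ and $|S_kf-\Sen(\cS)_kf|\le 2|f|$ --- it is your square function $G$ that is too lossy an upper bound.)

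The fix is the paper's parity split, which is essential rather than cosmetic: telescope separately over even radii (differences $S_{2k}-S_{2(k-1)}$) and odd radii (differences $S_{2k+1}-S_{2k-1}$), producing two error operators $R_0,R_1$. Since these are differences of Krawtchouks of the \emph{same} parity, reflection $x\mapsto n-x$ multiplies both terms by the same sign and the scalar sums are genuinely invariant, so the restriction to $x\le n/2$ is legitimate. In that range your remaining plan is sound and is what the paper does: the identity $\kappa_x^{(n)}(\ell)-\kappa_x^{(n)}(\ell-1)=-\tfrac{2x}{n}\kappa_{x-1}^{(n-1)}(\ell-1)$ (applied twice, to handle the two-step differences) supplies the factor of $x/n$, and \lemref{krawt-bound} plus a geometric series closes the estimate. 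With the parity split in place, your argument becomes the paper's.
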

We prove the claim in the following two subsections.

\subsection{A method of Stein}\label{subsec:stein61}
The bounds on $\|\SB_\ell\|_{2 \to 2}$ for even and odd
radius $\ell$ are technically distinct
(but not in any interesting way). We present the arguments in parallel.
\subsubsection{Even radius: $S_{2r}$ for
$0 \leq r \leq \rmx
=\lfloor\lfloor n/2 \rfloor /2 \rfloor$}
Note that for $n=4m+a$, $0 \leq a \leq 3$, this gives
$\rmx=m$.

Abel's lemma gives the following easily verified identity:
\be S_{2r} -\frac{1}{r+1}\sum_{k=0}^r S_{2k} =
\frac{1}{r+1}\sum_{k=1}^rk(S_{2k}-S_{2(k-1)})\ee

Hence we have the following pointwise (that is to say, valid
at each point $x$) inequality for $0 \leq r \leq \rmx$,
$\rmx=\lfloor\lfloor n/2 \rfloor /2 \rfloor$:

\ban
 [\pi_x(S_{2r}& -\frac{1}{r+1}\sum_{k=0}^r S_{2k})f]^2  = \\
& \big[\sum_{k=1}^r\frac{\sqrt{k}}{r+1} \cdot \sqrt{k}
[\pi_x (S_{2k}-S_{2(k-1)})f]\big]^2 \\
\leq& \sum_{k=1}^r\frac{k}{(r+1)^2}\sum_{k=1}^r
 k[\pi_x(S_{2k}-S_{2(k-1)})f]^2\\
 =&\frac{r}{2(r+1)}\sum_{k=1}^r
 k[\pi_x(S_{2k}-S_{2(k-1)})f]^2 \\
  \leq & \frac{1}{2}\sum_{k=1}^{\rmx}
 k[\pi_x(S_{2k}-S_{2(k-1)})f]^2
\ean
The first inequality is by Cauchy-Schwartz.
This suggests defining an ``error term'' operator $R_0:V \to  V$ by
\be \pi_x R_0f =
\sqrt{ \frac{1}{2}\sum_{k=1}^{\rmx}
 k[\pi_x (S_{2k}-S_{2(k-1)})f]^2 }
 \ee
so that for any $r\leq \rmx$,
\be \|(S_{2r}-\Sen(\cS)_{2r})f\|_2^2  \leq\|R_0f\|_2^2 \label{useR0} \ee

\subsubsection{Odd radius: $S_{2r+1}$ for $0 \leq r \leq \rmx=
\lfloor (\lfloor n/2 \rfloor - 1 )/2 \rfloor$}
Note that for $n=4m+a$, $0 \leq a \leq 3$, this gives
\[\rmx=\begin{cases} m-1 & \text{if } a \in \{0,1\} \\ m &
\text{if } a \in \{2,3\} \end{cases}.\]

Abel's lemma gives:
\be S_{2r+1} -\frac{1}{r+1}\sum_{k=0}^rS_{2k+1} =
\frac{1}{r+1}\sum_{k=1}^rk(S_{2k+1}-S_{2k-1})\ee

Hence we have the following pointwise inequality for $0 \leq r \leq
\rmx= \lfloor (\lfloor n/2 \rfloor - 1 )/2 \rfloor$:

\begin{multline*}
 [\pi_x \left(S_{2r+1} -\frac{1}{r+1}\sum_{k=0}^r S_{2k+1}\right)f]^2  =\\
 \big[\sum_{k=1}^r\frac{\sqrt{k}}{r+1} \cdot
 \sqrt{k}[\pi_x (S_{2k+1}-S_{2k-1})f]\big]^2 \\
\ldots  \leq  \frac{1}{2}\sum_{k=1}^{\rmx}
 k[\pi_x (S_{2k+1}-S_{2k-1})f]^2
\end{multline*}
This suggests defining an ``error term'' operator $R_1:V  \to  V$ by
\be \pi_x R_1f =
\sqrt{ \frac{1}{2}\sum_{k=1}^{\rmx}
 k[\pi_x (S_{2k+1}-S_{2k-1})f]^2}
 \ee
so that for any $r\leq \rmx$,
\be \|(S_{2r+1}-\Sen(\cS)_{2r+1})f\|_2^2  \leq\|R_1f\|_2^2
\label{useR1} \ee

\subsection{Bounding the error term}\label{subsec:krawtchouk_bounds}
Define $Rf$ by $\pi_x Rf :=\max\{ \pi_x R_0f, \pi_x R_1f\}$.
Combining (\ref{useR0}) and (\ref{useR1}) we have for each $x\in
\II^n$, each $f\in \RR^{\II^n}$ and each $r\leq n/2$,
$$|\pi_x (S_r-\Sen(\cS)_r)f| \leq \pi_x Rf.$$  Maximizing the LHS over $r$,
squaring and summing over $x$, we obtain that
$$\|M_{\SB}f-M_{\Sen(\SB)}f\|_2 \leq \|Rf\|_2.$$

Claim \ref{cl:S-SenS} (and Proposition~\ref{prop:S-SenS}) now follow from:
\begin{lemma} There is a $C<\infty$ such that $\|R_0f\|_2,\|R_1f\|_2 \leq
  C\|f\|_2$.
\end{lemma}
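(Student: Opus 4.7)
My strategy is to pass to the character basis so that the operator bound $\|R_0 f\|_2 \le C\|f\|_2$ reduces to a pointwise estimate on a sum of squared Krawtchouk differences, and then to control that sum using Lemma~\ref{lem:krawt-bound}. Since each $S_k$ is diagonal in $\{\chi_y\}$ with eigenvalue $\kappa_k(|y|)$ by \eqref{eq:S_k-eig}, Plancherel gives
\[
\|R_0 f\|_2^2 \;=\; \frac{1}{2}\sum_{k=1}^{\rmx} k\,\|(S_{2k}-S_{2(k-1)})f\|_2^2 \;=\; \frac{1}{2}\sum_y |\hat f(y)|^2\, g_0(|y|),
\]
where $g_0(x):=\sum_{k=1}^{\rmx} k\,(\kappa_{2k}(x)-\kappa_{2(k-1)}(x))^2$. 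Thus it suffices to establish a pointwise bound $g_0(x)\le 2C^2$ uniform in $n$ and $x$, and the analogous $g_1$ built from the odd-indexed differences handles $R_1$ identically.

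By reflection symmetry (Lemma~\ref{lem:krawt-prop}.\ref{k2}) $g_i(n-x)=g_i(x)$, so we may restrict to $0\le x\le n/2$; the case $x=0$ is trivial since $\kappa_k(0)=1$ forces every summand to zero. When $x$ is comparable to $n$ (say $x/n\ge \delta$ for a fixed $\delta>0$), the crude inequality $(\kappa_{2k}(x)-\kappa_{2(k-1)}(x))^2 \le 2\kappa_{2k}(x)^2+2\kappa_{2(k-1)}(x)^2 \le 4e^{-4c(k-1)x/n}$, derived from Lemma~\ref{lem:krawt-bound}, already suffices, giving $g_0(x) \le 4\sum_{k\ge 1} k\,e^{-4c(k-1)x/n} = 4/(1-e^{-4cx/n})^2 = O(1)$.

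The delicate regime is $1\le x\ll n$, where this naive estimate degrades to $O((n/x)^2)$ because it fails to capture the telescoping in $\kappa_{2k}-\kappa_{2(k-1)}$. To remedy this, I plan to exploit the three-term recurrence $(n-k)\kappa_{k+1}(x)+k\kappa_{k-1}(x)=(n-2x)\kappa_k(x)$; substituting $k\mapsto 2k-1$ and rearranging yields
\[
(n-2k+1)\bigl[\kappa_{2k}(x)-\kappa_{2(k-1)}(x)\bigr] \;=\; (n-2x)\bigl(\kappa_{2k-1}(x)-\kappa_{2k-2}(x)\bigr)-2x\,\kappa_{2k-2}(x).
\]
The $-2x\,\kappa_{2k-2}(x)$ term carries the desired $x/n$ prefactor, and applying the same identity to the one-step difference on the right-hand side exposes another. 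Combined with Lemma~\ref{lem:krawt-bound}, this should produce an estimate of the form $|\kappa_{2k}(x)-\kappa_{2(k-1)}(x)|\le A(x/n)e^{-c'kx/n}$ for absolute constants $A,c'>0$, whence $g_0(x) \le A^2(x/n)^2\sum_k k\,e^{-2c'(k-1)x/n} = O((x/n)^2 (n/x)^2) = O(1)$.

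The main obstacle will be making this iterated extraction of the $x/n$ factor rigorous and uniform in $k$: each application of the recurrence produces an error that must be absorbed into the exponential decay from Lemma~\ref{lem:krawt-bound}, and some care is needed near the endpoint $k\sim n/4$ where the denominator $n-2k+1$ becomes small. As a sanity check, the closed forms $\kappa_k(1)=1-2k/n$ and $\kappa_k(2)=((n-2k)^2-n)/(n(n-1))$ yield $g_0(1),g_0(2)=O(1)$ by direct calculation, matching the heuristic $\kappa_{2k}(x)\approx(1-4k/n)^x$ whose squared two-step difference integrated against the weight $k$ reduces to a beta integral evaluating to $x/(2(2x-1))$. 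The argument for $R_1$ is entirely parallel via the odd-indexed version of the recurrence.
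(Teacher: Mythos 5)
Your reduction via Plancherel to the pointwise bound $g_0(x),g_1(x)\le C$, the symmetry reduction to $0\le x\le n/2$, and the treatment of the regime $x\ge\delta n$ (where \lemref{krawt-bound} applied to each Krawtchouk separately already suffices) are all correct and match the paper's setup. But the heart of the lemma is precisely the regime $1\le x\ll n$, and there your argument stops at a plan: the estimate $|\kappa_{2k}(x)-\kappa_{2(k-1)}(x)|\le A(x/n)e^{-c'kx/n}$ is exactly what needs to be proved, and you acknowledge that making the "iterated extraction of the $x/n$ factor" rigorous is the main obstacle. Moreover, the mechanism you propose — unrolling the three-term recurrence $(n-\ell)\kappa_{\ell+1}+\ell\kappa_{\ell-1}=(n-2x)\kappa_\ell$ to get a recursion for the one-step differences $\Delta_\ell=\kappa_{\ell+1}-\kappa_\ell$ of the form $\Delta_\ell=\tfrac{\ell}{n-\ell}\Delta_{\ell-1}-\tfrac{2x}{n-\ell}\kappa_\ell$ — runs into a genuine difficulty at the top of the summation range: the contraction factor $\ell/(n-\ell)$ tends to $1$ as $\ell\to n/2$ (your stated worry about $n-2k+1$ being small at $k\sim n/4$ is misplaced, since that quantity is $\ge n/2$; the real issue is this contraction factor). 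There the naive unrolled bound degrades to $|\Delta_\ell|\lesssim \tfrac{x}{n}\sum_j e^{-cjx/n}=O(1)$, which carries neither the $x/n$ prefactor nor the exponential decay simultaneously, and $\sum_k k\cdot O(1)$ is of course unbounded. One can likely rescue this by exploiting the Gaussian decay of the partial products $\prod_i \tfrac{i}{n-i}$, but that is real work that your sketch does not do.

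The paper avoids all of this with one clean move you did not consider: by the self-duality $\kappa_k^{(n)}(x)=\kappa_x^{(n)}(k)$ (\lemref{krawt-prop}, part 1), a difference in the \emph{degree} index becomes a difference in the \emph{argument}, for which there is an exact one-term identity
\[
\kappa^{(n)}_{x}(\ell)-\kappa^{(n)}_{x}(\ell-1)=-\frac{2x}{n}\,\kappa^{(n-1)}_{x-1}(\ell-1),
\]
proved by a two-line counting argument (condition on whether the $x$-subset contains the element $\ell$). Applying it twice gives $\kappa^{(n)}_x(\ell)-\kappa^{(n)}_x(\ell-2)=-\tfrac{4x(n-x)}{n(n-1)}\kappa^{(n-2)}_{x-1}(\ell-2)$: the $x/n$ factor appears exactly, with no recursion, no error terms, and no endpoint issues, and \lemref{krawt-bound} applied to $\kappa^{(n-2)}_{x-1}$ then closes the geometric sum uniformly in $x$ and $n$. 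I recommend you replace your recurrence scheme with this identity; as it stands, your proposal has a gap exactly where the lemma's content lies.
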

\begin{proof}
As seen in the preliminaries, the operators $S_k$ commute
and share the eigenvectors $\{\chi_y\}_{y \in \II^n}$, with
eigenvalues given by Eqn.~(\ref{eq:S_k-eig}): $S_k \chi_y =
\kappa_k^{(n)}(|y|) \chi_y$. Let $E_x$ be the projection
operator on the $\binom{n}{|y|}$-dimensional eigenspace
spanned by $\{\chi_y\}_{|y|=x}$; so $S_k=\sum_{x=0}^n
\kappa_k^{(n)}(x)E_x$.
We calculate:
\begin{eqnarray*}
\|R_0f\|_2^2 &=& \sum_{z \in \II^n}
\frac{1}{2}\sum_{k=1}^{\rmx}
 k[((S_{2k}-S_{2(k-1)})f)(z)]^2 \\
&=& \frac{1}{2}\sum_{k=1}^{\rmx} k
\sum_{x=0}^n
\|(\kappa_{2k}^{(n)}(x)-\kappa_{2(k-1)}^{(n)}(x))E_x f \|_2^2
\\
&=& \frac{1}{2}\sum_{x=0}^n \|E_x f \|_2^2
\sum_{k=1}^{\rmx}
k(\kappa_{2k}^{(n)}(x)-\kappa_{2(k-1)}^{(n)}(x))^2
\end{eqnarray*}
Likewise: (here and below the value of $\rmx$ depends on whether $R_0$ or $R_1$ is being bounded)
\begin{eqnarray*}
\|R_1f\|_2^2 &=& \sum_{z \in \II^n}
\frac{1}{2}\sum_{k=1}^{\rmx}
 k[((S_{2k+1}-S_{2k-1})f)(z)]^2 \\
&=& \frac{1}{2}\sum_{x=0}^n \|E_x f \|_2^2
\sum_{k=1}^{\rmx}
k(\kappa_{2k+1}^{(n)}(x)-\kappa_{2k-1}^{(n)}(x))^2
\end{eqnarray*}

Since $\|f\|_2^2=\sum_{x=0}^n \|E_x f \|_2^2$, it suffices
to show that there is a $C<\infty$ such
that for every $0 \leq x \leq n$,
 \begin{subequations} \label{eq:eval_diff}\ba
\left[\sum_{k=1}^{\rmx}
k\big(\kappa_{2k}^{(n)}(x)-\kappa_{2(k-1)}^{(n)}(x)\big)^2\right]&\leq C\\
 \left[\sum_{k=1}^{\rmx}
k\big(\kappa_{2k+1}^{(n)}(x)-\kappa_{2k-1}^{(n)}(x)\big)^2
 \right] &\leq C
\ea\end{subequations}
Recall that it suffices by \lemref{krawt-prop}.\ref{k2}
to consider $x \leq n/2$. For $x=0$,
(\ref{eq:eval_diff}) is trivial as the LHS
is $0$. For $x>0$ we use \lemref{krawt-prop}.\ref{k1} to
rewrite the parenthesized term (with $\ell=2k$ or
$\ell=2k+1$) as follows:

\be \nonumber
\kappa^{(n)}_{x}(\ell) -\kappa^{(n)}_{x}(\ell-1)  =
      -2\frac{{n-1\choose x-1}}{{n\choose
    x}}\kappa^{(n-1)}_{x-1}(\ell-1) =
      -\frac{2x}{n}\kappa^{(n-1)}_{x-1}(\ell-1)
\ee
To see this, recall that $\binom{n}{x}\kappa^{(n)}_x(\ell)$
counts $x$-subsets of $\{1,\ldots,n\}$ according to the
parity of their intersection with $\{1,\ldots,\ell\}$; now
condition on whether the $x$-subset contains the element
$\ell$.

Consequently,
\be \nonumber
\kappa^{(n)}_{x}(\ell) -\kappa^{(n)}_{x}(\ell-2)
 =
 -\frac{2x}{n}\big(\kappa^{(n-1)}_{x-1}(\ell-1) +
 \kappa^{(n-1)}_{x-1}(\ell-2)\big)
\ee
which by a similar argument is
\be
=-\frac{4x}{n} \frac{{n-2 \choose x-1}}{{n-1 \choose
    x-1}}\kappa^{(n-2)}_{x-1}(\ell-2)
=-\frac{4x}{n}\frac{(n-x)}{(n-1)}\kappa^{(n-2)}_{x-1}(\ell-2)
\ee
The two terms on the LHS of \eq{eval_diff} are now

\bea
\sum_{k=1}^{\rmx}k\big(\kappa^{(n)}_{x}(2k) -
  \kappa^{(n)}_{x}(2k-2)\big)^2
&=&
\sum_{k=1}^{\rmx}k\big(\frac{4x(n-x)}{n(n-1)}
  \kappa^{(n-2)}_{x-1}(2k-2)\big)^2
\sum_{k=1}^{\rmx}k\big(\kappa^{(n)}_{x}(2k+1) -
  \kappa^{(n)}_{x}(2k-1)\big)^2  \nonumber
\\&=&
\sum_{k=1}^{\rmx}k\big(\frac{4x(n-x)}{n(n-1)}
  \kappa^{(n-2)}_{x-1}(2k-1)\big)^2
\label{krawt-collapse}
\eea

For $x=1$,
quantities~(\ref{krawt-collapse})
come to $\sum_{k=1}^{\rmx}16kn^{-2}$ which is upper bounded
by a constant.

For $x>1$ we upper
bound~(\ref{krawt-collapse}), first
by
\[ \frac{16x^2}{n^2}\sum_{k=1}^{\rmx}
 k\big(\kappa^{(n-2)}_{x-1}(2k-2)\big)^2  \quad \text{and} \quad
 \frac{16x^2}{n^2}\sum_{k=1}^{\rmx}
 k\big(\kappa^{(n-2)}_{x-1}(2k-1)\big)^2
 \]
which in turn are upper bounded by (applying each value of
$\rmx$):
\[ \frac{16x^2}{n^2}\sum_{k=0}^{\lfloor n/2 \rfloor}
 (k/2+1)\big(\kappa^{(n-2)}_{x-1}(k)\big)^2 . \]
Now apply Lemma~\ref{lem:krawt-bound} to upper bound this by
\ba &\frac{16x^2}{n^2}\sum_{k=0}^{\infty}
 (k/2+1) e^{-2c(x-1)k/(n-2)} \nonumber\\
&= \frac{16x^2}{n^2}\sum_{k=0}^{\infty}
 (k/2+1) e^{-\alpha k} \label{eq:alpha-def}\\
&= \frac{x^2}{n^2}\frac{16(1-e^{-\alpha})+8e^{-\alpha}}{(1-e^{-\alpha})^2}
\label{eq:geom-id}\\
&\leq 24\left( \frac{x}{n (1-e^{-\alpha})} \right)^2 \nonumber\\
& \leq 24 \left( \frac{x}{n \alpha} \right)^2 \nonumber\\
& = 24 \left( \frac{(n-2)x}{2cn(x-1)} \right)^2 \nonumber\\
& \leq 24/c^2,\nonumber
\ea
where in \eq{alpha-def} we have defined $\alpha=2c(x-1)/(n-2)$ and in
\eq{geom-id} we have used the identity
$\sum_{k=0}^{\infty} ke^{-\alpha k} = e^{-\alpha} (1-e^{-\alpha})^{-2}$.

This completes the requirement of Eqn.~\ref{eq:eval_diff}.
\end{proof}

\section{Comparing senate maximal functions: proof of
Proposition~\ref{PROP:SENATES}}\label{sec:senates}

\begin{repprop}{prop:senates}\mbox{}

\propsenates
 \end{repprop}

\begin{proof}
The proof relies on pointwise comparison of maximal
functions.

If $A, B$ are matrices, write $A \leq B$ if $B-A$ is a
nonnegative matrix. If $\cA,\cB$ are sets of nonnegative
matrices indexed by integers or reals, we write $\cA
\lesssim \cB$ if for every $A \in \cA$ there is a
probability measure $\mu_A$ on $\cB$ such that $A \leq \int
B \; d\mu_A(B)$. Observe that in this case for any
nonnegative function $f$ and any $x$, $\sup_{A \in \cA}
\pi_x Af \leq \sup_{B \in \cB} \pi_xBf$, and therefore for any
norm, $\|M_\cA\| \leq \|M_\cB\|$ (and in particular for $\|
\cdot \|_{2 \to 2}$).

For any $k > \lfloor n/2 \rfloor $,
\[ \pi_x \Sen(\cS)_k f \leq \pi_x \Sen(\cS)_{\lfloor n/2 \rfloor}
(f+\iota f)
\]
Therefore $\|M_{\Sen(\cS)}\|_{2 \to 2} \leq
2\|M_{\Sen(\SB)}\|_{2 \to 2}$.
However, we will not compare $\Sen(\SB)$ and $\Sen(\cN)$ directly.
Instead, we will introduce a variant of $\cN$ that more closely
resembles $\SB$, but is no longer a semigroup.

Recall that $N_t$ represents the average over independently flipping each bit with
probability $p=(1-e^{-t})/2$.  Define $\tilde N_p$ to represent the
same noise process but parameterized by $p$ instead of $t$.  Thus
$$N_t
= \tilde N_{\frac{1-e^{-t}}{2}}
\qquad \text{and}\qquad
\tilde N_p = N_{-\ln(1-2p)}.$$
 While the sets $\{N_t\}_{t\geq 0}$ and
$\{\tilde N_p\}_{p\in [0,1/2)}$ are of course the same, their Senate
operators $\Sen(\cN)$ and $\Sen(\tilde\cN)$ are different:
\begin{align}
\Sen(\cN)_T
&= \frac{1}{T}\int_0^T N_t \; {\rm d}t
\label{eq:avg-over-t}\\
\Sen(\tilde \cN)_P & =
\frac{1}{P}\int_0^P \tilde N_p \; {\rm d}p
 =
\frac{1}{2P}\int_0^{-\ln(1-2P)} e^{-t} N_t \;{\rm d}t
\label{eq:avg-over-p}
\end{align}
In \eq{avg-over-p}, $P$ is varies over $(0,1/2)$ and in
\eq{avg-over-t}, $T$ can be any positive real number.

Hence \propref{senates} is established  in two subsidiary claims:
\begin{lemma} \label{lem:N-compare}
 $\Sen(\tilde \cN) \lesssim \Sen(\cN)$.
\end{lemma}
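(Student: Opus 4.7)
The plan is to construct, for each $P \in (0, 1/2)$, an explicit probability measure $\mu_P$ on $T \in (0, \infty)$ such that
\[
\Sen(\tilde\cN)_P \;=\; \int \Sen(\cN)_T \, d\mu_P(T),
\]
i.e.\ equality rather than just the required domination. From \eq{avg-over-p}, setting $T_0 := -\ln(1-2P)$, the starting point is $\Sen(\tilde\cN)_P = \frac{1}{2P}\int_0^{T_0} e^{-t} N_t \, dt$.

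The key identity is $t\,\Sen(\cN)_t = \int_0^t N_s \, ds$, so $N_t$ is the $t$-derivative of $t\,\Sen(\cN)_t$. I would integrate by parts (entry-wise in the operator-valued integrand), using $e^{-T_0} = 1 - 2P$ and the vanishing of the boundary term at $t=0$, to obtain
\[
\int_0^{T_0} e^{-t} N_t \, dt \;=\; (1-2P)\,T_0\, \Sen(\cN)_{T_0} \;+\; \int_0^{T_0} t e^{-t}\, \Sen(\cN)_t \, dt.
\]
Dividing by $2P$ expresses $\Sen(\tilde\cN)_P$ as a manifestly nonnegative combination of the senate operators $\Sen(\cN)_T$: an atom of mass $(1-2P)T_0/(2P)$ at $T = T_0$, plus continuous density $Te^{-T}/(2P)$ on $(0, T_0)$.

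The last step is to check that the total weight equals $1$. Using the elementary identity $\int_0^{T_0} t e^{-t}\, dt = 1 - (1+T_0)e^{-T_0} = 2P - (1-2P)T_0$, the atom mass and integrated density telescope to $2P/(2P) = 1$. Thus $\mu_P$ is a probability measure on $(0, \infty)$ and the lemma follows. I anticipate no real obstacle: the only step that could have failed — matching the total mass to exactly $1$ — is forced automatically by the relation $e^{-T_0} = 1 - 2P$ built into the definition of $T_0$, so the calculation is entirely routine once the integration-by-parts idea is in hand.
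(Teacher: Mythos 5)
Your proposal is correct and is essentially the paper's own argument: the paper writes $f(t)=e^{-t}/2P$, splits $f(t)=f(\tau)+\int_t^\tau(-f'(T))\,dT$ with $\tau=-\ln(1-2P)$, and swaps the order of integration to get exactly your decomposition — an atom of mass $(1-2P)\tau/2P$ at $T=\tau$ plus density $Te^{-T}/2P$ on $(0,\tau)$. Your integration by parts via $N_t=\frac{d}{dt}\bigl(t\,\Sen(\cN)_t\bigr)$ is the same computation in a different guise, and your explicit total-mass check replaces the paper's shortcut of evaluating both sides on the constant function.
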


\begin{lemma} \label{lem:S-bar-N-tilde}
 $\Sen(\SB) \lesssim C\cdot\Sen(\tilde \cN)$ for some constant $C>0$.
\end{lemma}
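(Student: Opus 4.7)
The plan is to reduce the lemma to a coefficient-wise comparison in the basis $\{S_\ell\}_{\ell=0}^{n}$. Expanding $\tilde{N}_p=\sum_\ell\binom{n}{\ell}p^\ell(1-p)^{n-\ell}S_\ell$ and integrating term by term gives
\[
\Sen(\tilde{\cN})_P=\sum_{\ell=0}^{n}c_\ell(P)\,S_\ell,\qquad c_\ell(P):=\frac{\binom{n}{\ell}}{P}\int_0^P p^\ell(1-p)^{n-\ell}\,dp,
\]
while $\Sen(\SB)_K=\frac{1}{K+1}\sum_{\ell=0}^{K}S_\ell$. Because every $S_\ell$ is a nonnegative matrix, it suffices to exhibit, for each $K\leq\lfloor n/2\rfloor$, a single $P=P(K)\in(0,1/2)$ with $C\,c_\ell(P(K))\geq 1/(K+1)$ for every $\ell\leq K$; then $C\,\Sen(\tilde{\cN})_{P(K)}-\Sen(\SB)_K$ is a nonnegative combination of nonnegative matrices, hence itself nonnegative, which yields $\Sen(\SB)_K\leq C\,\Sen(\tilde{\cN})_{P(K)}$.

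A short calculation (differentiating both sides in $P$ produces a telescoping sum) yields the Beta--Binomial identity
\[
c_\ell(P)=\frac{\Pr[\mathrm{Bin}(n+1,P)>\ell]}{P(n+1)},
\]
so the required inequality reads $\Pr[\mathrm{Bin}(n+1,P(K))>\ell]\geq P(K)(n+1)/(C(K+1))$. I would then choose $P(K)$ in two regimes. \emph{Low $K$:} if $K+1\leq(n+1)/4$, set $P(K)=2(K+1)/(n+1)$. Since $(n+1)P(K)=2(K+1)$ is an integer, the Kaas--Buhrman theorem makes it a median of $\mathrm{Bin}(n+1,P(K))$; for every $\ell\leq K<2(K+1)$ this forces $\Pr[\mathrm{Bin}(n+1,P(K))>\ell]\geq 1/2$, and hence $c_\ell(P(K))\geq 1/(4(K+1))$, so $C=4$ works here. \emph{High $K$:} if $K+1>(n+1)/4$, set $P(K)=\tfrac{1}{2}-\tfrac{1}{4\sqrt{n+1}}$; then $P(K)(n+1)\leq(n+1)/2\leq 2(K+1)$, and the task reduces to proving $\Pr[\mathrm{Bin}(n+1,P(K))>\ell]\geq$ (universal) constant for all $\ell\leq\lfloor n/2\rfloor$.

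The main obstacle is establishing this constant lower bound, because for any $P<1/2$ the median of $\mathrm{Bin}(n+1,P)$ cannot exceed $\lfloor n/2\rfloor$, so the median shortcut from the low regime misses by one. My plan is to compare with $\mathrm{Bin}(n+1,1/2)$: the involution $X\mapsto n+1-X$ immediately gives $\Pr[\mathrm{Bin}(n+1,\tfrac{1}{2})>\ell]\geq 1/2$ for every $\ell\leq\lfloor n/2\rfloor$, and a first-order Taylor estimate in $\delta=\tfrac{1}{2}-P(K)$ bounds the derivative $\partial_P\Pr[\mathrm{Bin}(n+1,P)>\ell]=(n+1)\binom{n}{\ell}P^\ell(1-P)^{n-\ell}$ by $O(\sqrt{n+1})$, via Stirling on the central binomial coefficient. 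The choice $\delta=1/(4\sqrt{n+1})$ therefore costs at most a constant fraction, leaving $\Pr[\mathrm{Bin}(n+1,P(K))>\ell]\geq 1/4$ for all sufficiently large $n$ (with the finitely many small $n$ checked by hand). Combining the two regimes yields $\Sen(\SB)\lesssim C\cdot\Sen(\tilde{\cN})$ with a universal constant $C$ (one checks $C=8$ suffices).
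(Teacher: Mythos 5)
Your proposal is correct, and while it shares the paper's overall framework---reduce $\Sen(\SB)_K\lesssim C\,\Sen(\tilde\cN)$ to a coefficient-by-coefficient comparison in the basis $\{S_\ell\}$, using a point mass at a single well-chosen $P=P(K)$---the key estimate is carried out by a genuinely different route. The paper bounds $a_k=\frac{1}{P}\int_0^P B(n,p,k)\,{\rm d}p$ from below by restricting to a window $p\in[k/n,(k+\sqrt k)/n]$ of width $\sqrt k/n$ on which $B(n,p,k)=\Omega(1/\sqrt k)$, which it establishes via Stirling's formula and an integral of the logarithmic derivative $\frac{{\rm d}}{{\rm d}q}\ln B(n,q,k)=\frac{k-nq}{q(1-q)}$, split into three cases ($k=0$; $k+\sqrt k\le n/2$; $k+\sqrt k>n/2$). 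You instead observe the exact identity $c_\ell(P)=\Pr[\mathrm{Bin}(n+1,P)>\ell]/(P(n+1))$, which converts the required lower bound into a binomial tail bound; the low-$K$ regime then follows from the integer-mean median fact and the high-$K$ regime from the symmetry of $\mathrm{Bin}(n+1,1/2)$ plus a first-order perturbation in $P$. This buys a much cleaner argument and a far better constant ($C=8$ versus the paper's $3e^{20}$, modulo the factor $3$ in Stirling that the paper also absorbs), at the cost of invoking the beta--binomial identity and the median theorem. Two small points to nail down before this is fully rigorous: (i) in the high-$K$ regime the constant in the derivative bound $(n+1)\max_j B(n,P,j)\le c\sqrt{n+1}$ must be made explicit and multiplied by $\delta=1/(4\sqrt{n+1})$ to verify the loss is strictly below $1/4$ (it is, roughly $\tfrac14\sqrt{2/\pi}\approx 0.2$, but the margin is not huge), and (ii) the finitely many small $n$ you defer can be dispatched wholesale by the observation, also made in the paper, that $\|M_{\cS}\|_{2\to 2}\le n+1$ trivially.
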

\end{proof}

\begin{proof}[Proof of \lemref{N-compare}]
For each $P$ we will write $\Sen(\tilde\cN)_P$ as a convex combination
of $\Sen(\cN)_T$ for different values of $T$. By considering the
action of each side on the constant function we will see that it
suffices to write
$\Sen(\tilde\cN)_P$ as a bounded nonnegative combination
of $\Sen(\cN)_T$ for different values of $T$ (i.e. a linear
combination with coefficients that are nonnegative and whose sum is bounded).

Let $f(t) := e^{-t} / 2P$ and $\tau := -\ln(1-2P)$ so that
\eq{avg-over-p} becomes
\ba
\Sen(\tilde \cN)_P
&= \int_0^\tau {\rm d}t f(t) N_t  \\
&= \int_0^\tau {\rm d}t f(\tau) N_t + \int_0^\tau {\rm d}t
(f(t)-f(\tau)) N_t  \\
&= \tau f(\tau) \Sen(\cN)_\tau + \int_0^\tau {\rm d}t \left(
  \int_t^\tau {\rm d}T (-f'(T)) \right) N_t \\
&= \tau f(\tau) \Sen(\cN)_\tau + \int_0^\tau {\rm d}T \int_0^T {\rm d}t (-f'(T)) N_t \\
&= \tau f(\tau) \Sen(\cN)_\tau + \int_0^\tau {\rm d}T (-T f'(T))
\Sen(\cN)_T
\label{eq:nonnegative-combination}
\ea

Since $f'(T)<0$, \eq{nonnegative-combination} is the desired nonnegative
combination.  We have written the proof in this way to stress that the
only features of
\eq{avg-over-t} and \eq{avg-over-p} used are that $\Sen(\cN)_T$ is a
``flat'' distribution over $N_t$ and $\Sen(\tilde\cN)_P$ has $N_t$ weighted
by a weakly decreasing function.
\end{proof}

\begin{proof}[Proof of \lemref{S-bar-N-tilde}]
To compare $\Sen(\bar\cS)$ and $\Sen(\tilde \cN)$, we need to show that for any $K\leq
n/2$, we can find a distribution over $P$ such that $\Sen(\bar\cS)_K$ is pointwise $\leq$ the appropriate average over $\Sen(\tilde\cN)_P$ times a constant.
In fact, it will suffice to consider a distribution that is concentrated on a single value of $P$.
   Define $P_K:= \min(\frac{K+\sqrt{K}}{n}, \frac{1}{2})$.  In \lemref{binom-LB}, we
will show that $\Sen(S)_K \leq C\cdot \Sen(\tilde\cN)_{P_K}$,
thus implying that $\Sen(\bar\cS) \lesssim
C\cdot\Sen(\tilde \cN)$.  The idea behind \lemref{binom-LB} is that
for each $k\leq K$, there are significant contributions to the $S_k$
coefficient of $\Sen(\tilde\cN)_{P_K}$ for $p$ throughout the range
$[k/n,(k+\sqrt k)/n]$.  This window has width $\sqrt{k}/n$,
contributes $\Omega(1/\sqrt k)$ weight to $S_k$ at each point and is
normalized by $\frac{1}{P_K}\approx \frac{n}{K}$.  The total
contribution is thus $\Omega(1/K)$.

\begin{lemma}\label{lem:binom-LB}
Let $n\geq 9$, $K\leq n/2$ and $P_K = \min(\frac{K+\sqrt{K}}{n},
\frac{1}{2})$.
Then $\Sen(S)_K \leq 3e^{20}\cdot \Sen(\tilde\cN)_{P_K}$,
\end{lemma}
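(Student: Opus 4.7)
The plan is to reduce the matrix inequality $\Sen(\cS)_K \leq 3e^{20}\Sen(\tilde\cN)_{P_K}$ to a termwise coefficient bound in the basis $\{S_k\}_{k=0}^n$. Since $(S_k)_{x,y}=\ind{d(x,y)=k}/\binom{n}{k}$, the operators $S_0,\ldots,S_n$ have pairwise disjoint supports as matrices, so a combination $\sum_k a_k S_k$ is a nonnegative matrix iff every $a_k\geq 0$. Expanding,
\[
\Sen(\cS)_K \;=\; \frac{1}{K+1}\sum_{k=0}^K S_k, \qquad
\Sen(\tilde\cN)_{P_K} \;=\; \sum_{k=0}^n c_k S_k,
\]
with $c_k := \frac{1}{P_K}\int_0^{P_K} B_k(p)\,dp$ and $B_k(p):=\binom{n}{k}p^k(1-p)^{n-k}$. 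The lemma therefore reduces to showing, for each $0\leq k\leq K$,
\[
\int_0^{P_K} B_k(p)\,dp \;\geq\; \frac{P_K}{3e^{20}(K+1)}.
\]

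Next I would lower bound this integral by restricting to a small window around the mode $p=k/n$ of $B_k$. For $1\leq k\leq K$ take $W_k := [k/n,\min((k+\sqrt k)/n,P_K)]\subseteq[0,P_K]$: in the main regime $P_K=(K+\sqrt K)/n$ this window has length exactly $\sqrt k/n$. On $W_k$, an elementary calculation comparing $p^k(1-p)^{n-k}$ to its value at the mode shows only a bounded multiplicative loss (the log-derivative of $k\log p + (n-k)\log(1-p)$ vanishes at $p=k/n$, and the integrated drop over a window of relative width $1/\sqrt k$ is $O(1)$). Combined with Stirling's bound \eq{stirling}, which implies $\binom{n}{k}(k/n)^k(1-k/n)^{n-k}\geq 1/\sqrt{8k(n-k)/n}$, one gets $B_k(p)\geq c_0/\sqrt{k+1}$ uniformly on $W_k$, hence $\int_{W_k}B_k\,dp\geq c_1/n$. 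The case $k=0$ is handled separately on $[0,\min(1/n,P_K)]$ using $(1-p)^n\geq(1-1/n)^n\geq 1/(2e)$, giving a bound of the same order.

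Finally, $P_K/(K+1)=O(1/n)$ in both regimes: if $P_K=(K+\sqrt K)/n$ then $P_K/(K+1)\leq 2/n$ for $K\geq 1$, while if $P_K=1/2$ then $K+\sqrt K>n/2$ forces $K+1=\Omega(n)$ for $n\geq 9$. Combining with the previous paragraph gives $c_k=\Omega(1/(K+1))$, comfortably within the factor $3e^{20}$ once constants are tracked loosely. I expect the main technical obstacle to be the regime where $k$ (and hence $K$) is within $O(\sqrt n)$ of $n/2$, so that the window $W_k$ is truncated at $P_K=1/2$ down to width $\ll\sqrt k/n$. In this subcase I would switch to a symmetry argument: the Beta integral identity $\int_0^1 B_k(p)\,dp=1/(n+1)$ together with the fact that the Beta$(k+1,n-k+1)$ distribution has median $\leq 1/2$ for $k\leq n/2$ gives $\int_0^{1/2}B_k\,dp\geq 1/(2(n+1))$, which combined with $K+1=\Omega(n)$ closes the argument.
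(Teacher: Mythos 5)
Your proposal is correct and, for the bulk of the argument, follows the same route as the paper: reduce the matrix inequality to a coefficientwise comparison in the basis $\{S_k\}$ (the paper leaves the disjoint-support justification implicit; you make it explicit, which is a nice touch), then lower-bound $\frac{1}{P_K}\int_0^{P_K}B(n,p,k)\,dp$ by restricting to the window $[k/n,(k+\sqrt k)/n]$, using Stirling's bound \eq{stirling} for the value at the mode and the log-derivative $\frac{k-nq}{q(1-q)}$ to control the drop across the window, with $k=0$ handled separately on $[0,1/n]$. The one place you genuinely diverge is the subcase $k+\sqrt k>n/2$, where the window is truncated at $P_K=1/2$. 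The paper handles this by the same log-derivative estimate applied to $p\in[\tfrac12-\tfrac{1}{\sqrt n},\tfrac12]$, which is where the constant $e^{20}$ comes from. You instead invoke the exact identity $\int_0^1 B(n,p,k)\,dp=\frac{1}{n+1}$ together with the claim that the Beta$(k+1,n-k+1)$ median is at most $1/2$ for $k\leq n/2$; combined with $K=\Omega(n)$ in this regime, that closes the case with a much better constant (roughly $\frac{1}{4(K+1)}$ versus the paper's $\frac{1}{3e^{20}K}$) and less computation. The median claim does need a one-line justification --- it is equivalent, via the incomplete-beta/binomial-tail identity, to $\Pr[\mathrm{Bin}(n+1,\tfrac12)\geq k+1]\geq\tfrac12$ for $k\leq n/2$, which holds by the symmetry of $\mathrm{Bin}(n+1,\tfrac12)$ --- but it is correct. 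Your sketch of the ``bounded multiplicative loss'' over the window should be written out as in the paper's \eq{binom-ratio}, but the computation you describe is exactly the right one.
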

The true constant is certainly much better, and perhaps closer to
$1/2$.  We remark that $n\geq 9$ can be assumed WLOG, since
$\|M_{\cS}\|_{2\ra 2} \leq n+1$ by the triangle inequality.

\begin{proof}
Observe that $P:=P_K\leq 2K/n$.

For $0\leq k\leq K$, we now compare the coefficient of $S_k$ in
$\Sen(\cS)_K$ (where it is $1/(K+1)$) to its value in
$\Sen(\tilde\cN)_P$, where it is $\frac{1}{P}\int_0^P {\rm
  d}p\,B(n,p,k)$.  Denote this latter quantity by $a_k$.

Consider first $k=0$.  If $K=0$ and $P=0$, then $S_0$ has weight 1 in
both cases.  Otherwise $P\geq 1/n$, and
$$a_0 \geq \frac{1}{P}\int_0^{1/n} (1-p)^n {\rm d}p
\geq \frac{n}{2K}\cdot \frac{1}{n}\cdot \left(1-\frac{1}{n}\right)^n
\geq \frac{1}{8K}.$$
This last inequality uses the fact that $(1-1/n)^n$ is increasing with
$n$ and thus is $\geq 1/4$ for $n\geq 2$.

Next, suppose $k+\sqrt{k} \leq \frac{n}{2}$.  Then $P \geq
\frac{k + \sqrt{k}}{n}$.  We will consider only the contribution to
$a_k$ resulting from $0 \leq p-k/n \leq \sqrt{k}/n$.  First, use
Stirling's formula
to bound
$$\binom{n}{k} \geq \exp(n H_2(k/n)) \sqrt{\frac{n}{9 k(n-k)}}
\geq \exp(n H_2(k/n)) \frac{1}{3\sqrt{k}},$$
implying that
\be B(n,\frac{n}{k},k) \geq \frac{1}{3\sqrt{k}} .
\label{eq:binom-peak}\ee
Next, observe that
\be \frac{{\rm d}}{{\rm d}q} \ln B(n,q,k) =
\frac{k-nq}{q(1-q)}.\label{eq:log-deriv}\ee
When
$k/n \leq q \leq p \leq (k+\sqrt{k})/n$, we have $k-nq \geq k-np \geq -\sqrt{k}$ and
$q(1-q) \geq \frac{k}{n}(1-\frac{k}{n}) \geq \frac{k}{2n}$.  Thus,
$0 \leq p-k/n \leq \sqrt{k}/n$ implies that
\begin{subequations}\label{eq:binom-ratio}
\ba \ln\frac{B(n,p,k)}{B(n,\frac{k}{n},k)}
&= \int_{k/n}^p {\rm d}q \frac{k-nq}{q(1-q)}
\\&\geq \left(p-\frac{k}{n}\right) \frac{k-np}{k/2n}
\\&\geq -2 \ea
\end{subequations}
Combining \eq{binom-peak} and \eq{binom-ratio}, we find that
$B(n,p,k)\geq 1/6e^2\sqrt{k}$ for $k/n \leq p \leq (k+\sqrt{k})/n$.
Thus
$$a_k \geq \frac{1}{P}\int_{k/n}^{(k+\sqrt k)/n} {\rm d}p\, B(n,p,k)
\geq \frac{n}{2K}\cdot\frac{\sqrt k}{n}\cdot\frac{1}{6e^2\sqrt k}
\geq \frac{1}{12e^2K}.$$

Finally, we consider the case when $k+\sqrt{k} \geq n/2$.  In this
regime, we have $P=1/2$.  Also, $k\leq n/2$, so $K\geq k\geq
n/2-\sqrt{n/2}\geq n/4$ (assuming in the last step that $n\geq 8$).

Consider $p\in [1/2-1/\sqrt{n}, 1/2]$.
Assume that $n\geq 9$ so that $1/2-1/\sqrt{n}\geq 1/6$.  We can then
use \eq{log-deriv} to bound
\ban \ln\frac{B(n,\frac{1}{2}-\frac{1}{\sqrt{n}},k)}{B(n,\frac{k}{n},k)}
&\geq -\left(\frac{k}{n} -\frac{1}{2} + \frac{1}{\sqrt{n}}\right)
\frac{k-\frac{n}{2}+\sqrt{n}}{\frac{1}{6}\cdot\frac{1}{2}}\\
&\geq -12\left(1-\frac{1}{\sqrt{2}}\right)^2 \geq -20.\ean
Similarly
$$\ln\frac{B(n,\frac{1}{2},k)}{B(n,\frac{k}{n},k)} \geq
\left(\frac{1}{2}-\frac{k}{n}\right)\frac{k-n/2}{1/12} \geq -6.$$
We conclude that
$$a_k \geq
\frac{1}{P}\int_{\frac{1}{2}-\frac{1}{\sqrt{n}}}^{\frac{1}{2}}
{\rm d}p\, B(n,p,k)
\geq 2 \cdot \frac{1}{\sqrt{n}}\cdot \frac{1}{3e^{20}\sqrt{k}}
\geq \frac{1}{3e^{20}K}. $$
\end{proof}
This completes the proof of \lemref{S-bar-N-tilde}.
\end{proof}

\appendix
\section{Maximal inequalities on semigroups}
In this appendix, we review the maximal ergodic inequality that we use
to show $\|M_{\Sen(\cN)}\|_{1\ra 1,w}\leq 1$.

\begin{theorem}[maximal ergodic inequality]\label{thm:ergodic}
 Let $A$ be a positive sublinear operator with
  $\|A\|_{1\ra 1}\leq 1$ and $\|A\|_{\infty\ra\infty}\leq 1$.  Let
  ${\cal A}$ denote the (discrete) semigroup generated by $A$.
Then
$$\|M_{\Sen({\cal A})}\|_{1\ra 1,w} \leq 1.$$
\end{theorem}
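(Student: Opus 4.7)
The plan is to prove this weak-$(1,1)$ maximal inequality by the classical two-step argument of Hopf: first reduce to a combinatorial ``maximal ergodic lemma'' for partial sums, then pass to the level-set form of the conclusion.

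\emph{Step 1 (level-set reduction).} I fix $\lambda > 0$ and $f \geq 0$ in $L^1$ (the general case follows by splitting into positive and negative parts) and set $g := f - \lambda \mathbf{1}$. Define partial sums $T_N g := \sum_{k=0}^N A^k g$ and their one-sided running maximum $T_N^* g := \max_{0 \leq k \leq N} T_k g$, with $T_{-1} g := 0$ so that $T_N^* g \geq 0$. Positivity together with $\|A\|_{\infty \ra \infty} \leq 1$ gives $A^k \mathbf{1} \leq \mathbf{1}$, hence $\sum_{k=0}^N A^k \mathbf{1} \leq (N+1)\mathbf{1}$, so $\Sen(\cA)_N f(x) > \lambda$ forces $T_N g(x) > 0$. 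Therefore
$$\{x : M_{\Sen(\cA)} f(x) > \lambda\} \;\subseteq\; E := \bigcup_{N\geq 0} E_N, \qquad E_N := \{x : T_N^* g(x) > 0\}.$$

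\emph{Step 2 (Hopf's maximal ergodic lemma).} The heart of the proof will be to show $\int_{E_N} g \, d\mu \geq 0$ for every $N$. I plan to derive this from the pointwise inequality $T_N^* g \leq g + A\, T_N^* g$ valid on $E_N$: at any $x \in E_N$ the running maximum is attained by some $T_k g$ with $k \geq 0$, and writing $T_k g = g + A T_{k-1} g$, using that $A$ is order-preserving, and using $T_{k-1} g \leq T_N^* g$ (trivially when $k = 0$, since $T_N^* g \geq 0$) yields the claim. Integrating over $E_N$, noting that $T_N^* g$ vanishes off $E_N$, and applying $\|A\|_{1 \ra 1} \leq 1$ to the nonnegative function $T_N^* g$ gives
$$\int_{E_N} g \;\geq\; \int T_N^* g \;-\; \int A\, T_N^* g \;\geq\; 0.$$

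Combining the two steps, $\int_{E_N} f \geq \lambda\, \mu(E_N)$, hence $\lambda\, \mu(E_N) \leq \|f\|_1$, and sending $N \to \infty$ with $E_N \uparrow E$ yields $\lambda\, \mu(\{M_{\Sen(\cA)} f > \lambda\}) \leq \|f\|_1$, which is exactly $\|M_{\Sen(\cA)}\|_{1 \ra 1, w} \leq 1$. I expect the main obstacle to be the pointwise step $A T_{k-1} g \leq A\, T_N^* g$ when $A$ is only sublinear rather than linear: it uses that $A$ is monotone with respect to pointwise inequalities, which I read as the intended meaning of ``positive sublinear'' in this context. A lesser subtlety is that $A$ is not assumed to fix $\mathbf{1}$, only $A \mathbf{1} \leq \mathbf{1}$, but this asymmetry happens to go in the favorable direction in the reduction of Step~1.
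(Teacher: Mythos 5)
Your proposal is correct and follows essentially the same route as the paper's appendix proof (Garsia's argument for the Hopf--Kakutani--Yosida inequality): your set $E_N$ and nonnegative running maximum $T_N^* g$ are the paper's $E^T_{f-\lambda}$ and $(\bar M_T(f-\lambda))^+$, and your pointwise inequality $T_N^* g \leq g + A\,T_N^* g$ on $E_N$ is exactly the paper's $f \geq \bar M_T f - A((\bar M_T f)^+)$, integrated the same way using $\|A\|_{1\ra 1}\leq 1$. The monotonicity caveat you flag is likewise implicit in the paper's step $f + A(\bar M_T f)^+ \geq f + A\overline{\Sen}(\cA)_t f$, so there is no substantive difference.
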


This inequality was originally proved by Kakutani and Yosida (in the case
when $A$ is linear) and Hopf (for general semigroups).  However since
the proof of~\cite{Garsia65} is simple and self-contained, we restate
it here.

\begin{proof}
For $f$ an arbitrary function and $T$ a nonnegative integer, define $E^T_f =
I[M_{\Sen({\cal A})_{\leq T}} f \geq 0]$.  We will first prove that
\be \langle E^T_f, f\rangle \geq 0,
\label{eq:KYH-ineq}\ee
for all $T$.  To see that the theorem follows from this claim, apply
\eq{KYH-ineq} to $f-\lambda$ and we find that $\lambda
\|E^T_{f-\lambda}\|_1 \leq \langle E^T_{f-\lambda}, f\rangle
\leq \|f\|_1$.  Thus $\|E^T_{f-\lambda}\|_1 \leq
\lambda^{-1}\|f\|_1$.  Since this inequality holds for all $T$, it
implies that $\|I[M_{\Sen({\cal A})}f\geq \lambda]\|_1$ is also $\leq
\lambda^{-1}\|f\|_1$.

We now return to the proof of \eq{KYH-ineq}.  We define for this
purpose an unweighted Senate operator:
$$ \overline{\Sen}({\cal A})_T
 = \sum_{t=0}^T A_t.$$
We abbreviate $M_T := M_{\Sen(\cA)_{\leq T}}$ and
 $\bar M_T := M_{\overline{\Sen}(\cA)_{\leq T}}$.
Observe that $I[M_Tf \geq 0] = I[\bar M_Tf \geq 0]$.
Also define, for any function $g$, the function $(g)^+$ to be the
nonnegative part of $g$.

Thus, if $0\leq t\leq T$, then we have $\overline{\Sen}(\cA)_tf \leq
\bar M_Tf \leq (\bar M_Tf)^+$.
This implies that
$$f + A(\bar M_Tf)^+ \geq f + A \overline{\Sen}(\cA)_tf
 = \overline{\Sen}(\cA)_{t+1}f.$$
Thus, $f \geq \overline{\Sen}(\cA)_t f - A((\bar M_Tf)^+),$ (including
an $t=0$ case that can be checked separately) and
maximizing over $0\leq t\leq T$, we have
$$f \geq \bar M_Tf - A((\bar M_Tf)^+).$$
Now we take the inner product of both sides with $E^T_f$ and find
\ba
\langle E^T_f, f \rangle & \geq
\langle E^T_f, \bar M_Tf - A((\bar M_Tf)^+) \rangle \\
& = \langle E^T_f, (\bar M_Tf)^+ - A((\bar M_Tf)^+) \rangle \\
& = \|(\bar M_Tf)^+\|_1 - \langle E^T_f, A((\bar M_Tf)^+) \rangle \\
& \geq  \|(\bar M_Tf)^+\|_1 - \|A((\bar M_Tf)^+)\|_1 \\
& \geq 0 \label{eq:because-contractive}
\ea
The final inequality uses the fact that $\|A\|_{1\ra 1}\leq 1$.
\end{proof}

For our purposes, we will want to convert the bound on the $1\ra 1,w$
norm into a bound on $p\ra p$ norms, especially for $p=2$.  This is
achieved by the Marcinkiewicz interpolation theorem~\cite{Zygmund56}.

\begin{theorem}
Let $A$ be a sublinear operator with $\|A\|_{p\ra p,w} \leq N_p$ and
$\|A\|_{q\ra q,w} \leq N_q$.  Then for any $1\leq p<r<q \leq \infty$, we have
$$\|A\|_{r\ra r} \leq 2 N_p^\delta N_q^{1-\delta}
 \left(\frac{r(q-p)}{(r-p)(q-r)}\right)^{1/r},$$
 with $\delta = p(q-r)/r(q-p)$.
\end{theorem}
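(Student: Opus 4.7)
The plan is to follow the standard Marcinkiewicz truncation argument: for each threshold $\lambda>0$ split $f$ into its ``large'' and ``small'' parts, bound the two contributions to $|\{|Af|>\lambda\}|$ using the two weak-type hypotheses, and then integrate in $\lambda$ via the layer-cake formula.

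First I would fix a cutoff parameter $\sigma>0$ (to be optimized at the end) and, for each $\lambda>0$, decompose
\[
f = f_\lambda + f^\lambda, \qquad f_\lambda := f\cdot\mathbf{1}_{\{|f|>\sigma\lambda\}}, \quad f^\lambda := f\cdot\mathbf{1}_{\{|f|\le\sigma\lambda\}}.
\]
By sublinearity $|Af|\le |Af_\lambda|+|Af^\lambda|$, so $\{|Af|>\lambda\}\subseteq\{|Af_\lambda|>\lambda/2\}\cup\{|Af^\lambda|>\lambda/2\}$. The weak-type hypotheses give
\[
|\{|Af_\lambda|>\lambda/2\}| \le \left(\tfrac{2N_p}{\lambda}\right)^{p}\|f_\lambda\|_p^{p}, \qquad |\{|Af^\lambda|>\lambda/2\}| \le \left(\tfrac{2N_q}{\lambda}\right)^{q}\|f^\lambda\|_q^{q},
\]
the second inequality being interpreted in the $q=\infty$ sense below.

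Next I would apply the layer-cake identity $\|Af\|_r^r = r\int_0^\infty \lambda^{r-1}|\{|Af|>\lambda\}|\,d\lambda$, insert the two bounds above, and interchange the order of integration via Fubini. The ``large'' contribution becomes
\[
r(2N_p)^p\int |f(x)|^p\int_0^{|f(x)|/\sigma}\lambda^{r-1-p}\,d\lambda\,dx = \frac{r(2N_p)^p}{(r-p)\sigma^{r-p}}\|f\|_r^r,
\]
using $r>p$ for convergence at the upper endpoint. The ``small'' contribution becomes, for finite $q$,
\[
r(2N_q)^q\int |f(x)|^q\int_{|f(x)|/\sigma}^\infty\lambda^{r-1-q}\,d\lambda\,dx = \frac{r(2N_q)^q}{(q-r)\sigma^{r-q}}\|f\|_r^r,
\]
using $r<q$ for convergence at infinity. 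Summing,
\[
\|Af\|_r^r \le \|f\|_r^r\cdot\left[\frac{r(2N_p)^p\sigma^{p-r}}{r-p} + \frac{r(2N_q)^q\sigma^{q-r}}{q-r}\right].
\]

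Finally I would optimize over $\sigma$. Differentiating in $\sigma$ and setting the derivative to zero yields $\sigma^{q-p} = (2N_p)^p/(2N_q)^q$; substituting this value and simplifying produces exactly the claimed constant $2N_p^{\delta}N_q^{1-\delta}\bigl(r(q-p)/(r-p)(q-r)\bigr)^{1/r}$, where the exponent $\delta=p(q-r)/r(q-p)$ falls out of the weights $p/(q-p)$ and $q/(q-p)$ appearing in the geometric mean. For the case $q=\infty$ the ``small'' term is handled differently: one sets $\sigma=1/(2N_\infty)$ so that $\|Af^\lambda\|_\infty \le N_\infty\sigma\lambda = \lambda/2$, making the second bad set empty, and only the first term needs to be integrated. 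The main technical nuisance is not conceptual but bookkeeping --- tracking the factor of $2$, the $r/(r-p)$ and $r/(q-r)$ denominators produced by the two $\lambda$-integrals, and checking that the optimized $\sigma$ reassembles into the stated geometric-mean form. All steps rely only on sublinearity of $A$ and Fubini, so no further structural hypothesis on $A$ is needed.
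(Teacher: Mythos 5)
Your proof is correct. The paper itself gives no proof of this theorem --- it is quoted in the appendix as a known result with a citation to Zygmund --- so there is nothing to compare against except the standard argument, which is exactly what you have reproduced: the two-sided truncation $f=f_\lambda+f^\lambda$ at height $\sigma\lambda$, sublinearity to split the distribution set, the layer-cake formula with Fubini, and optimization over $\sigma$. I checked the bookkeeping: the stationary point $\sigma^{q-p}=(2N_p)^p/(2N_q)^q$ is right, the two terms at that $\sigma$ each equal $(2N_p)^{p(q-r)/(q-p)}(2N_q)^{q(r-p)/(q-p)}$ times $r/(r-p)$ and $r/(q-r)$ respectively, their sum reassembles into $\bigl(2N_p^\delta N_q^{1-\delta}\bigr)^r\,r(q-p)/((r-p)(q-r))$ with $\delta=p(q-r)/r(q-p)$ and $1-\delta=q(r-p)/r(q-p)$, and the $q=\infty$ branch with $\sigma=1/(2N_\infty)$ recovers the limiting constant. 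The only point worth adding for completeness is the observation that the weak-type hypotheses are applicable to the truncated pieces, i.e.\ that $f\in L^r$ forces $f_\lambda\in L^p$ (since $|f|^p\le(\sigma\lambda)^{p-r}|f|^r$ on $\{|f|>\sigma\lambda\}$) and $f^\lambda\in L^q$; this is routine but should be stated.
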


In our case, a maximal operator $M_\cA$ has $\|M_\cA\|_{\infty\ra
  \infty,w}=1$ and if $\cA$ is a positive contractive semigroup, then
\thmref{ergodic} implies that $\|M_\cA\|_{1\ra 1,w} \leq 1$ as well.
This implies that for any $1<p$, we have
\be \|M_{\cA}\|_{p\ra p} \leq
2\left(\frac{p}{p-1}\right)^{1/p},
\ee
which is $2\sqrt{2}$ when $p=2$.

\section*{Acknowledgments} Thanks to Gil Kalai for a stimulating
conversation which pointed us in the direction of maximal
inequalities, to Kostantin Makarychev and Yury Makarychev for discussions about
UGC on the hypercube, to Yuval Peres for helpful comments, and to Terence Tao for suggestions along the lines of
\sect{S-SenS}.
AWH was funded by NSF grants 0916400, 0829937 and 0803478 and DARPA
QuEST
contract FA9550-09-1-0044. AK was at Microsoft Research at the beginning and during part of this work. LJS was funded in part by NSF grants CCF-0829909, CCF-1038578, CCF-1319745, and the NSF-supported Institute for Quantum Information and Matter; this work began during his visit in 2010 to the Theory Group at Microsoft Research, Redmond.

\bibliographystyle{abbrv}
\bibliography{toc/refs-final}
\end{document}